\title[Graded contractions on orthogonal Lie algebras]{Graded contractions on the \\orthogonal Lie algebras of dimensions 7 and 8}
\author[]{Cristina Draper$^{\star}$, Thomas Leenen Meyer$^{\ast}$, Juana S\'anchez-Ortega$^{\bullet}$
}
\subjclass[2010]{Primary  
17B70; 
Secondary 
17B30; 
17B81. 
}
\keywords{Graded contractions,    solvable and nilpotent Lie algebras, octonions, gradings, Fano plane}
\thanks{${}^*$ Supported by Junta de Andaluc\'{\i}a  through project  FQM-336,   and  by the Spanish Ministerio de Ciencia e Innovaci\'on   through projects PID2020-118452GB-I00 and PID2023-152673GB-I00, with FEDER funds.}
\newcommand{\gla}{\mathfrak{gl}} 
\newcommand{\la}{\mathfrak{L}} 
\newcommand{\f}[1]{\mathfrak{#1}}
\newcommand{\supp}[1]{\Supp^{#1}} 
\newcommand{\bb}[1]{\mathbb{#1}}
\newcommand{\ep}{\varepsilon}
\newcommand{\lae}{\la^{\ep}}
\newcommand{\weyl}{\mathcal{W}}
\newcommand{\comment}[1]{}
\DeclareMathOperator{\Supp}{\mathcal{S}}
\DeclareMathOperator{\Der}{Der}
\DeclareMathOperator{\Aut}{Aut}
\DeclareMathOperator{\Stab}{Stab}
\DeclareMathOperator{\ad}{ad}
\DeclareMathOperator{\id}{id}
\DeclareMathOperator{\Spec}{Spec}
\theoremstyle{plain} 
\newtheorem{theorem}{Theorem}[section]
\newtheorem{lemma}[theorem]{Lemma}
\newtheorem{cor}[theorem]{Corollary}
\newtheorem{prop}[theorem]{Proposition}
\theoremstyle{definition} 
\newtheorem{define}[theorem]{Definition}
\newtheorem{remark}[theorem]{Remark}
\newtheorem{example}[theorem]{Example}
\newtheorem{conclusion}[theorem]{Conclusion}
\begin{document}


\maketitle

\begin{abstract}
Graded contractions of certain non-toral  $\mathbb{Z}_2^3$-gradings on the  simple Lie algebras $\mathfrak{so}(7,\mathbb C)$ and $\f{so}(8,\mathbb C)$ are classified up to two notions of equivalence. In particular, there arise two large families of Lie algebras (the majority of which are solvable) of dimensions 21 and 28. This is achieved as a significant generalization of the classification of related graded contractions on $\mathfrak g_2$, the derivation algebra of the octonion algebra. Many of the results can be further extended to any \emph{good} $\mathbb Z_2^3$-grading on an arbitrary Lie algebra.

\end{abstract}
	
 \section{Introduction and some preliminaries}  
 
  Some constructions of Lie algebras modifying a concrete Lie algebra are interesting for Physics: the most studied  of these constructions are contractions, degenerations and deformations (see, for instance, \cite{Fialowski2}). The first two in general produce Lie algebras more abelian than the original one, while the third procedure, the deformation, makes Lie algebras with more complicated   Lie bracket. In some sense, these procedures are mutually opposite.
 Also these processes are relevant in Mathematics, producing new types of Lie algebras, which is always important in  pursuit of the list of Lie algebras of a given dimension, up to isomorphism.

There are many definitions of contractions, according to the approaches and different goals of the different researchers.
 Perhaps graded contractions are the most recent and the least studied. They were introduced by some physicists in \cite{CPSW},
 consisting essentially in preserving Lie gradings through the contraction process. This is not odd, any Lie algebra admits many gradings and the gradings provide a lot of information on the structure of the algebra.
 In a graded contraction  homogeneous components are preserved, in particular  this offers a collection of Lie algebras graded by the same group. 
 Some works worth mentioning   on  graded contractions are \cite{ref33,ref34}, by Weimar-Woods.
 There has been a little more activity in terms of examples of graded contractions of a given Lie algebra, although not too much either.
 For instance, graded contractions on the 8-dimensional $sl(3;\mathbb C)$, which 
 admits four different fundamental gradings,
 are considered in \cite{CPSW,checos,gr-cont,checos06}. Also graded contractions of affine Lie algebras are considered  in 
 \cite{affine1,affine2} with the purpose of getting new Lie algebras of infinite dimension, where the examples developed are  $\hat A_1$ and $\hat A_2$ ($\hat{\mathfrak g}=\mathfrak g\otimes\mathbb C[t,t^{-1}]\oplus\mathbb C k$, for $\mathbb C[t,t^{-1}]$   the associative algebra of
the Laurent polynomials in $t$, and $k$  a central extension). A program for the computation of graded contractions of Lie algebras and their representations for a given grading group is presented in \cite{conordenador}. 

The complexity of a general treatment for getting graded contractions can be observed in the aforementioned works. 
Nevertheless, we propose to make a leap both in the Lie algebra dimension and in the grading group. 
We will build on our previous work \cite{Thomas1} on graded contractions of the simple Lie algebra $\mathfrak g_2$ over $\bb C$. This
  is the Lie algebra of derivations of the complex  octonion algebra, and is 
endowed with a   
 $\bb{Z}_2^3$-grading $\Gamma_{\f{g}_2}$ induced from the natural     
 $\bb{Z}_2^3$-grading on the   octonion algebra. 
The resulting $\mathbb Z_2^3$-grading on $\mathfrak g_2$ is highly symmetric, and it shares  some key features with certain $\mathbb Z_2^3$-gradings of $\mathfrak b_3$ and $\mathfrak d_4$, which are those that will make it possible to transfer many of the results from  \cite{Thomas1}.
This paper  is going to provide us with quite strong results, which include complete classifications of graded contractions of $\Gamma_{\f{g}_2}$ up to isomorphism, at least for the field of complex numbers. A combinatorial approach 
 made it possible to obtain these classifications without using a computer.

Let $\Gamma: \la = \bigoplus_{g\in G} \la_g$ be a $G=\bb{Z}_2^3$-grading on a complex Lie algebra $\la$, that is a vector space decomposition such that $[\la_g,\la_h] \subseteq \la_{g + h}$, for all $g, \, h$ in $G$.  
A \emph{graded contraction} of $\Gamma$ is a map $\varepsilon\colon G\times G\to \bb{C}$ such that the vector space $\la$ endowed with product $[x, y]^\varepsilon := \varepsilon(g, h)[x, y]$, for $x\in \la_g,  y\in \la_h$, is a Lie algebra. We write $\la^\ep$ to refer to $(\la, [\cdot, \cdot]^\epsilon)$,
which is $G$-graded too with $(\la^\ep)_g=\la_g$. We will say that two graded contractions $\ep$ and $\ep'$ are \emph{equivalent} (respectively, \emph{strongly equivalent}) if the corresponding graded algebras are isomorphic as   graded algebras (respectively graded-isomorphic), we mean, 
there exists $f\colon\la^\ep\to\la^{\ep'}$   isomorphism of  Lie algebras such that for any $g\in G$ there is $h \in G$ with $f(\la_g) = \la_h$
(respectively, $f(\la_g) = \la_g $ for any $g\in G$). We write $\ep\sim\ep'$ (respectively $\ep \approx \ep'$). Clearly $\ep \approx \ep'\Rightarrow  \ep \sim \ep'$.
 
  Remark~2.4 in \cite{Thomas1} describes when an arbitrary map $\ep\colon G\times G\to \bb{C}$ is a graded contraction of $\Gamma$.
Denote also by $\ep\colon G \times G \times G \to \bb{C}$ the ternary map defined as
$\ep(g, h, k) := \ep(g, h + k)\ep(h, k)$. 
The conditions for  $\ep$ to be  a graded contraction of $\Gamma$  are:
\begin{enumerate}
\item[(a1)] $\big(\ep(g, h) - \ep(h, g)\big)[x, y] = 0,$
\item[(a2)] $\big(\ep(g, h, k) - \ep(k, g, h)\big)[x, [y, z]]
+ \big(\ep(h, k, g) - \ep(k, g, h)\big)[y, [z, x]] = 0$,
\end{enumerate}
 for all $g, h, k \in G$ and any choice of homogeneous elements $x\in \la_g, \, y\in \la_h, \, z\in\la_k$. 
 These conditions appear to be strongly dependent on   the considered grading on $\la$. Thus,  a map $\ep\colon G\times G\to \bb{C}$ can be a graded contraction of a $G$-graded Lie algebra $\la$ but not of a different $G$-graded Lie algebra $\la'$. 
  For instance, if $\la$ is abelian, any vector space decomposition $\Gamma$ of $\la$ labelled in $G$ is in fact a $G$-grading, and an arbitrary 
   map $\ep\colon G\times G\to \bb{C}$ turns out to be a graded contraction of $\Gamma$. Clearly, this example is not very interesting since $\la^\ep$ and $\la$ are just the same algebra. However,  it is illustrative of the fact that the concrete grading is involved, so that we cannot think of graded contractions on a Lie algebra without choosing a particular $G$-grading.

 The problem of determining the equivalence classes of the graded contractions of  a $G$-grading $\Gamma$ on a  Lie algebra $\la$ is in general a difficult task which, as mentioned,   strongly depends on $\Gamma$. But for 
     $\Gamma_{\f{g}_2}$ this equivalence problem has been completely solved in \cite[Theorem~4.13]{Thomas1}.  
     The current article  takes advantage of those results  
     to achieve other classifications of graded contractions, seemingly more difficult because the algebras  which are contracted have higher dimension. Namely, they are $\mathfrak b_3$ and $\mathfrak d_4$, of dimensions 21 and 28. 
     As a byproduct, our procedure will provide new examples of  non-simple and non-abelian Lie algebras of considerably large dimensions. 
     Obviously the classification of Lie algebras of dimension 21  is not even remotely known  (nor that of dimension 28 either) so having some
     non-trivial examples is convenient.
       We rely on the following   observation: the mentioned orthogonal  Lie algebras possess $\bb Z_2^3$-gradings with similar properties to $\Gamma_{\f{g}_2}$.
       For  instance with the same symmetry group, and hopefully with the same graded contractions!  
        It seems surprising but it will turn out that the graded contractions are the \emph{same} (Theorem~\ref{th_lasclasesdeverdad}). The same maps but of course not the same Lie algebras: we will obtain a new list of Lie algebras of dimensions 21 and 28, whose properties are described in Theorem~\ref{teo_lasalgebras}. \smallskip

      This manuscript is structured as follows. Section~\ref{se_propiedadesb3d4} deals with the orthogonal Lie algebras $\f{so}(8,\bb C)$, of type $D_4$, and $\f{so}(7,\bb C)$, of type $B_3$, and describes a \emph{good} $\bb Z_2^3$-grading on each of them. The
      chosen  gradings fit well with the chain $\f{g}_2\subset \f{so}(7,\bb C)\subset \f{so}(8,\bb C)$.
       We will also prove some technical properties on the brackets we will need afterwards. 
      Next in    Section~\ref{se_resultadosengrcont}, we shall define good $\bb Z_2^3$-gradings, which are exactly those whereupon the results in \cite{Thomas1} can be applied. If a grading is good, for each graded contraction   there is another one which is admissible and equivalent to the original one.  For admissible graded contractions the problem can be transferred to a combinatorial context,   where the Lie bracket is no longer a fundamental piece. The  supports of such admissible 
      graded contractions turn out to be nice sets, certain combinatorial objects  defined and classified in \cite[\S3.3]{Thomas1}. 
      For most of the nice sets, there is only one equivalence class of admissible graded contractions with that  particular support 
      (it is in the proof of uniqueness that the algebraically closed field is used).
       The  remaining three nice sets are related to   infinite families of Lie algebras. 
       We will use the high symmetry of our gradings (to be precise, the fact that their Weyl group is as  large as possible)
       to prove that collinear nice sets give isomorphic Lie algebras. 
       The nice sets up to collineation, jointly with the properties of the algebras attached to those supports, are described in  Theorem~\ref{teo_lasalgebras}. However, the question 
       of the classification of the  isomorphism classes is very subtle.
       A first conflictive point is Theorem~\ref{teo_fuerte}, where we prove that  if there exists a graded-isomorphism between $\la^\ep$ and $\la^{\ep'}$, then there exists another one that acts scalarly on each homogeneous component (this translates the equivalence by normalization). This is not combinatorial at all, we have to dive into the concrete structure of the graded algebra we  wish  to contract. 
       The second difficult point is to relate the classification of the  isomorphism classes with the classification of the strong isomorphism classes.  It
        is not true 
       that an admissible graded contraction equivalent to another one, $\ep$, is necessarily strongly equivalent to $\sigma\cdot \ep$ for some collineation $\sigma$, but, according to Proposition~\ref{nueva}, this can be assured if the supports are collinear, which is  usually the situation. From here,  the  main result, Theorem~\ref{th_lasclasesdeverdad}, follows. Conclusion~\ref{conclu} tries to  summarize the main results. 
       Observe that an effort has been made not only to adapt the results of \cite{Thomas1}, but also to simplify them. 
       We have tried to be clear but brief.
       Observe also that the analogy between \cite[Theorem 4.13]{Thomas1} and 
       Theorem~\ref{th_lasclasesdeverdad} does not mean that the adaptation is trivial.

      Throughout this paper, the considered ground  field will be the one of complex numbers, because that was the hypothesis in \cite{Thomas1}. 
    But   Section~\ref{se_real} will be devoted to give brief comments on the real case, highlighting which of the results will remain applicable to this context as well.
     
     
     \section{ $\bb Z_2^3$-gradings on $\mathfrak b_3$ and $\mathfrak d_4$ induced from the octonion algebra} \label{se_propiedadesb3d4}

     The \emph{complex octonion algebra} $\mathcal{O}$ has a basis $\{e_0,\dots, e_7\}$ with  an identity element $e_0=1$, $e_i^2=-1$ for all $i\ne0$,  and $e_ie_j=-e_je_i=e_k$, $e_je_k=-e_je_k=e_i$,
$e_ke_i=-e_ke_i=e_j$ whenever 
$$(i,j,k)\in\mathbf{L}:=\{(1,2,5),(5,6,7), (7,4,1),(1,3,6),(6,4,2),(2,7,3),(3,4,5)\}.$$
That is, the product is given  by following the arrows of the Fano plane:
\vspace{-15pt}\begin{flushright}
\begin{tikzpicture}[scale=0.5, every node/.style={transform shape}] \label{fano}
\draw [postaction={decoration={markings, mark= at position 0.75 with {\arrowreversed{stealth}}}, decorate}] 
(30:1) -- (210:2);
\draw [postaction={decoration={markings, mark= at position 0.75 with {\arrowreversed{stealth}}}, decorate}]
(150:1) -- (330:2);
\draw [postaction={decoration={markings, mark= at position 0.75 with {\arrowreversed{stealth}}}, decorate}]
(270:1) -- (90:2);
\draw [postaction={decoration={markings, mark= at position .24 with {\arrowreversed{stealth}}}, decoration={mark= at position .57 with {\arrowreversed{stealth}}},decoration={mark= at position .9 with {\arrowreversed{stealth}}}, decorate}]
(90:2)  -- (210:2) -- (330:2) -- cycle;
\draw [postaction={decoration={markings, mark= at position .3 with {\arrowreversed{stealth}}}, decoration={mark= at position .63 with {\arrowreversed{stealth}}},decoration={mark= at position .96 with {\arrowreversed{stealth}}}, decorate}] (0:0)  circle (1);
\draw 
(30:1) node[circle, draw, fill=white]{{$e_2$}} 
(210:2) node[circle, draw, fill=white]{{$e_7$}} 
(150:1) node[circle, draw, fill=white]{{$e_4$}}
(330:2) node[circle, draw, fill=white]{{$e_5$}} 
(270:1) node[circle, draw, fill=white]{{$e_6$}} 
(90:2) node[circle, draw, fill=white]{{$e_1$}} 
(0:0) node[circle, draw, fill=white]{{$e_3$}}; 
\end{tikzpicture}
 %
\quad\quad\quad\ \end{flushright} 
For our convenience, for any $i,j\in I:=\{1,\dots,7\}$, $i\ne j$, we will denote by $i*j\in I$ the only distinct index in the same line as $i$ and $j$. 
And we denote the line by $\ell_{ij}:=\{i,j,i*j\}$, without taking the order into account. 
We will say that $i\prec j$ if $e_ie_j=e_{i*j}$, so that  $j\prec i$ if $e_ie_j=-e_{i*j}$. 
 
 Let us emphasize the well-known fact that $\mathcal{O}$ is a non-associative algebra: if $(x,y,z)=(xy)z-x(yz)$ denotes the associator, we have $(e_1,e_2,e_3)=2e_4\ne0$. But $\mathcal{O}$ is alternative: the associator alternates, $(x,x,y)=(y,x,x)=0$, which says that $\mathcal{O}$ is closely to be associative.
  The quadratic form $n\colon \mathcal{O}\to\mathbb C$, $n(\sum_ix_ie_i)=\sum x_i^2$,  usually called the \emph{norm},	 is non-degenerate and multiplicative ($n(xy) = n(x)n(y)$).  
 The linear map $t\colon \mathcal{O}\to\mathbb C$, $t(\sum_ix_ie_i)=2x_0$, is the \emph{trace}, and the set of zero trace elements is denoted by $\mathcal{O}_0$, spanned by $\{e_1,\dots, e_7\}$. The restriction $n\vert_{\mathcal{O}_0}$ is non-degenerate too. We also use $n\colon \mathcal{O}\times\mathcal{O}\to\bb C$ to denote the polar form. 
 
 Some simple Lie algebras appear in direct connection to octonions, namely,
  \begin{itemize}
 \item $\mathfrak{so}(\mathcal{O},n)=\{f\in  \gla(\mathcal{O}):n(f(x),y)+n(x,f(y))=0\ \forall x,y\in\mathcal{O}\}\cong \f{so}(8,\bb C),$ which is simple of type $D_4$,
 \item $\mathfrak{so}(\mathcal{O}_0,n\vert_{\mathcal{O}_0})\cong \mathfrak{so}(7,\bb C)$, which is simple of type $B_3$, 
  \item $\Der(\mathcal{O}) =\left \{d\in\mathfrak{gl}(\mathcal{O})\mid  d(xy)=d(x)y+xd(y), \ \forall \, x,y\in \mathcal{O}\right\}$,   simple of type $G_2$.
 \end{itemize}
 These algebras are chained as follows
  $$
 \mathfrak{g}_2:= \Der(\mathcal{O}) \subset \mathfrak{b}_3:=\{f\in \mathfrak{so}(\mathcal{O},n):f(1)=0\}\subset\mathfrak{d}_4:= \mathfrak{so}(\mathcal{O},n).
  $$
Indeed, if $f\in \mathfrak{so}(\mathcal{O},n)$ with $f(1)=0$, then $n(1,f(y))=0$ for all $y\in\mathcal{O}$, so that $f(\mathcal{O})\subset 1^\perp=\mathcal{O}_0$ and  the restriction map $f\mapsto f\vert_{\mathcal{O}_0} $ gives a natural isomorphism between 
    $ \{f\in \mathfrak{so}(\mathcal{O},n):f(1)=0\}\le \mathfrak{so}(\mathcal{O},n)$ and $\mathfrak{so}(\mathcal{O}_0,n)$, of $B_3$-type. 
    Also, any derivation $d\in\Der(\mathcal{O})$ satisfies $d(1)=0$ and is skew-symmetric relative to the norm.   We can be even more precise in the description of the elements, with the help of the left and right   multiplication operators $L_x,R_x\colon \mathcal{O}\to \mathcal{O}$ given by $L_x(y)=xy$  and $R_x(y)=yx$. If we denote by $f^*$ the adjoint map of $f\in  \gla(\mathcal{O})$ with respect to the norm, that is, the linear map determined by $n(f(x),y)=n(x,f^*(y))$, then $L_x^*=L_{\bar x}$ and $R_x^*=R_{\bar x}$,
    for the involution $\bar x=t(x)-x$. In particular, $L_x,R_x\in\mathfrak{so}(\mathcal{O},n)=\{f\in  \gla(\mathcal{O}):f^*=-f\}$ if $x \in \mathcal{O}_0$ ($\bar x=-x$),
    and the derivation $D_{x,y}=[L_x,L_y]+[L_x,R_y]+[R_x,R_y]$ also belongs to $\mathfrak{so}(\mathcal{O},n)$. Moreover,
    $\Der(\mathcal{O})=\{\sum_iD_{x_i,y_i}:x_i,y_i\in\mathcal{O}\}\subset \{f\in \mathfrak{so}(\mathcal{O},n):f(1)=0\}$, since
    any derivation $d$ fulfills $d(1)=2d(1)=0$. 
    Note that if the map $\ad x:=L_x-R_x$ is  a derivation, then $0=[x,y]z+y[x,z]-[x,yz]=3(x,y,z)$, so that $(x, \mathcal{O}, \mathcal{O})=0$  and necessarily $x\in  \bb C 1$.
    This means that $\Der(\mathcal{O}) \cap\{\ad x:x\in\mathcal{O}_0\}=0$, and, by dimension count,
    $$
    \mathfrak{b}_3=\Der(\mathcal{O}) \oplus\{\ad x:x\in\mathcal{O}_0\}.
    $$
    Similarly, $(L_x+R_x)(1)=2x$, so that, again by dimension count,
\begin{equation}\label{eq_d4}
    \mathfrak{d}_4=\mathfrak{b}_3 \oplus\{L_x+R_x:x\in\mathcal{O}_0\}.
\end{equation}
    From here, $\mathfrak{d}_4=\Der(\mathcal{O}) \oplus L_{\mathcal{O}_0} \oplus R_{\mathcal{O}_0}$, which is  moreover a decomposition of $\mathfrak{d}_4$ as a sum of irreducible $\mathfrak{g}_2$-modules.

Recall that, for $G$ an abelian group,  a $G$-grading on an algebra $A$ is a decomposition $\Gamma:A=\oplus_{g\in G}A_g$ compatible with the product, that is, $A_gA_h\subset A_{g+h}$.
 The octonion algebra is $\bb Z_2^3$-graded in a natural way: we declare $e_i\in\mathcal{O}_{g_i}$ for 
 \[
\begin{array}{cccc}
g_0 := (0, 0, 0), & \quad g_1 :=(1, 0, 0), & \quad g_2 := (0, 1, 0), & \quad g_3 :=(0, 0, 1),
\\
g_4 := (1, 1, 1), & \quad g_5:= (1, 1, 0), & \quad g_6:= (1, 0, 1), & \quad g_7 := (0, 1, 1).
\end{array}
\]
Indeed, for $i,j\in I$, $i\ne j$, clearly $\mathcal{O}_{g_i}^2\subset \mathcal{O}_{g_0}$, from $e_i^2=-1$, and  also $\mathcal{O}_{g_i}\mathcal{O}_{g_j}\subset \mathcal{O}_{g_i+g_j}$, since
$e_ie_j\in\{\pm e_{i*j}\}$ and taking into account that our labelling has been chosen to satisfy $g_i+g_j=g_{i*j}$.
In fact, this $\bb Z_2^3$-grading  
 is obtained by  thrice applying a Cayley-Dickson doubling process starting from the field, which introduces a factor $\bb Z_2$ in each step.
 We call $\Gamma_{\mathcal{O}}$ this grading   on $\mathcal{O}$, which  induces a $G=\bb Z_2^3$-grading on $\gla(\mathcal{O})=\oplus_{g\in G} \,\gla(\mathcal{O})_g$ given by 
\begin{equation}\label{eq_gradgl}
 \gla(\mathcal{O})_g=\{f\colon \mathcal{O}\to \mathcal{O}\ \mathrm{ linear}:f( \mathcal{O}_h)\subset \mathcal{O}_{g+h}\ \forall h\in G\}.
\end{equation}
 (Any decomposition of a vector space into direct sum of subspaces labelled over a group $G$  induces such $G$-grading on the algebra $\gla(V)$,  which follows from considering block matrices.)
 It turns out that any of the three Lie subalgebras 
$\la\in\{ \mathfrak{d}_4, \, \mathfrak{b}_3,\, \mathfrak{g}_2\}$
 is a graded subspace of $\gla(\mathcal{O})$, that is, 
writing $\la_g=\gla(\mathcal{O})_g\cap \la$ for each $g\in G$, then $\la=\oplus_{g\in G}\la_g$. 
In case $\la= \mathfrak{d}_4$, the key is that $\{e_i\}_{i=0}^7$ is an orthonormal basis of $\mathcal{O}$; so that $n(\mathcal{O}_g,\mathcal{O}_h)=0$ if $g+h\ne e$.
 Thus,
if $f\in\mathfrak{so}(\mathcal{O},n)$ can be written as $f=\sum f_g$ with $f_g(\mathcal{O}_h)\subset \mathcal{O}_{g+h}$, we can  check that
$n(f_g(x),y)+n(x,f_g(y))=0$ for any $x,y\in\mathcal{O}$. Indeed, assume without loss of generality that $x,y$ are homogeneous elements, $x\in\mathcal{O}_h$ and $y\in\mathcal{O}_k$. 
If $g+h+k\ne e$, then $n(f_g(x),y)=0=n(x,f_g(y))$, and the result holds. Otherwise $ n(f_g(x),y)=n(\sum_\sigma f_\sigma(x),y)=n(f(x),y)$, since $n(f_\sigma(x),y)=0$ whenever $\sigma\ne g=h+k$. Similarly $n(x,f_g(y))=n(x,f(y))$ and so $n(f_g(x),y)+n(x,f_g(y))=n(f(x),y)+n(x,f(y))=0$.
This means that $f_g\in\mathfrak{so}(\mathcal{O},n)$ and $  \mathfrak{d}_4$ is a graded subspace.
The proof works identically  for $\la= \mathfrak{b}_3$.  
In any of the three cases, the
 $ \bb Z_2^3$-grading  on  $\la$ inherited from $\gla(\mathcal{O})$ will be denoted as $\Gamma_{\la}$. 
 
 This paper is devoted to explore the graded contractions of $\Gamma_{ \mathfrak{b}_3}$ and $\Gamma_{ \mathfrak{d}_4}$, taking advantage 
  of the  many   similarities between these two gradings and $\Gamma_{ \mathfrak{g}_2}$, which was studied in \cite{Thomas1}. 
  The three of them  share a surprising property: the grading $\Gamma_{\la}$ is a decomposition of the algebra $\la$ into direct sum of seven undistinguishable Cartan subalgebras such that the product of two of them fulfills another one. In particular, any basis formed by homogeneous elements is formed entirely by semisimple elements. All this is  well-known for  $\Gamma_{\mathfrak{g}_2}$,  for instance from \cite{GradsG2,EK13}. 
We will prove in the next lemmas this and other properties  we will need through this work, for $\la\in\{\f{d}_4,\f{b}_3\}$.

Recall that,  for any $x,y\in\mathcal{O}$,   the skew-symmetric maps 
$$\varphi_{x,y}:=n(x,-)y-n(y,-)x\in\mathfrak{so}(\mathcal{O},n),$$
permit to describe the whole orthogonal Lie algebra as $\f{so}(\mathcal{O},n)=\{\sum \varphi_{x_i,y_i}:x_i,y_i\in\mathcal{O}\}=\varphi_{\mathcal{O},\mathcal{O}}$. The bracket of arbitrary elements is very easy to
manage from the identity
  $[\psi, \varphi_{x,y}]=\varphi_{\psi(x),y}+\varphi_{x,\psi(y)}$, which holds for any $\psi\in\mathfrak{so}(\mathcal{O},n)$ and any $x,y\in\mathcal{O}$.
  Similarly,  $\f{b}_3= \varphi_{\mathcal{O}_0,\mathcal{O}_0}$.

 \begin{lemma}\label{le_corchete}
The homogeneous components of the gradings  $\Gamma_{ \mathfrak{d}_4}$ and $\Gamma_{ \mathfrak{b}_3}$ are, respectively,
$$
\begin{array}{ll}
( \mathfrak{d}_4)_g=\f{so}(\mathcal{O},n)_g=\Der(\mathcal{O})_{g}\oplus L_{\mathcal{O}_{g}}\oplus R_{\mathcal{O}_{g}}=\sum_{h\in G}\varphi_{\mathcal{O}_h,\mathcal{O}_{g+h}},\qquad\qquad&(\mathfrak{d}_4)_e= 0;\\
( \mathfrak{b}_3)_g=\Der(\mathcal{O})_{g}\oplus \{\ad x:x\in\mathcal{O}_g\}=\sum_{h\in G\setminus\{e,g\}}\varphi_{\mathcal{O}_h,\mathcal{O}_{g+h}},& ( \mathfrak{b}_3)_e=0;
\end{array}
$$
for any $g\ne e$, $g\in G=\bb Z_2^3$.   In particular the nonzero homogeneous components are Cartan subalgebras. 
 Furthermore,
\begin{itemize}
 \item[\rm(i)] For any $i\in I$, choose $j\ne i$ and $k\notin\ell_{ij}$. Then  the sets 
 $$
 \mathcal{B}_{ijk}=\{\varphi_{e_{0},e_{i}},\varphi_{e_{j},e_{i*j}},\varphi_{e_{k},e_{i*k}},\varphi_{e_{j*k},e_{i*j*k}}\}\  \mathrm{ and }\ 
 \mathcal{B}'_{ijk}=\{ \varphi_{e_{j},e_{i*j}},\varphi_{e_{k},e_{i*k}},\varphi_{e_{j*k},e_{i*j*k}}\}
 $$
 are basis of the homogeneous components $( \mathfrak{d}_4)_{g_i}$ and $( \mathfrak{b}_3)_{g_i}$, respectively.
  \item[\rm(ii)] For   arbitrary elements  
  \begin{equation}\label{eq_zyz'}
  \begin{array}{l}
  z=d\varphi_{e_{0},e_{i}}+a\varphi_{e_{j},e_{i*j}}+b\varphi_{e_{k},e_{i*k}}+c\varphi_{e_{j*k},e_{i*j*k}}\in ( \mathfrak{d}_4)_{g_i},\\
  z'=a\varphi_{e_{j},e_{i*j}}+b\varphi_{e_{k},e_{i*k}}+c\varphi_{e_{j*k},e_{i*j*k}}\in ( \mathfrak{b}_3)_{g_i},\qquad \qquad (a,b,c,d\in\bb C)
  \end{array}
  \end{equation}
  the matrices of the endomorphisms $\ad^2z\vert_{ (\mathfrak{d}_4)_{g_j}}$ relative to $\mathcal{B}_{jik}$ 
  and $\ad^2z'\vert_{ (\mathfrak{b}_3)_{g_j}}$ relative to $\mathcal{B}'_{jik}$ are, respectively,
  \scriptsize
  $$
  \begin{pmatrix} -d^2-a^2 & 2ad&0&0\\2ad&-d^2-a^2 &0&0\\0&0&-b^2-c^2 &2bc\\0&0&2bc&-b^2-c^2 \end{pmatrix}
  \quad\mathrm{and}\quad
   \begin{pmatrix} -a^2 & 0&0\\0&-b^2-c^2 &2bc\\0&2bc&-b^2-c^2 \end{pmatrix}
  .
  $$
  \normalsize 
  In particular the spectrum of $\ad^2z\vert_{ (\mathfrak{d}_4)_{g_j}}$  becomes $\{-(a\pm d)^2, -(b\pm c)^2\}$ 
  (respectively the spectrum of $\ad^2z'\vert_{ (\mathfrak{b}_3)_{g_j}}$   
  coincides with $\{-a^2, -(b\pm c)^2\}$).
   \end{itemize}
 \end{lemma}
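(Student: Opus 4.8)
First I would establish the decomposition formulas for the homogeneous components. We already know from the introductory material that $\mathfrak{d}_4 = \Der(\mathcal{O}) \oplus L_{\mathcal{O}_0} \oplus R_{\mathcal{O}_0}$ and $\mathfrak{b}_3 = \Der(\mathcal{O}) \oplus \{\ad x : x \in \mathcal{O}_0\}$, and that all of $\Der(\mathcal{O})$, $L_{\mathcal{O}_0}$, $R_{\mathcal{O}_0}$, $\{\ad x\}$ are graded subspaces (the grading on $\gla(\mathcal{O})$ restricts to each, since $L_{e_i}, R_{e_i}, \ad e_i$ all lie in $\gla(\mathcal{O})_{g_i}$). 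Intersecting with $\gla(\mathcal{O})_g$ then gives the direct-sum descriptions in terms of $\Der(\mathcal{O})_g$, $L_{\mathcal{O}_g}$, $R_{\mathcal{O}_g}$. For the $\varphi$-description, I would use $\f{so}(\mathcal{O},n) = \varphi_{\mathcal{O},\mathcal{O}}$ together with the fact that $\varphi_{x,y}$ with $x \in \mathcal{O}_h$, $y \in \mathcal{O}_k$ is homogeneous of degree $h+k$; summing over the pairs $(h, g+h)$ and checking $\varphi_{e_i, e_i} = 0$ (so the $e$-component vanishes, and in the $\mathfrak{b}_3$-case also the $h = e$ and $h = g$ terms, corresponding to $\varphi_{1, e_i}$ and $\varphi_{e_i, 1}$ type expressions, drop) gives the stated sums. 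The claim that the nonzero components are Cartan subalgebras then follows because each has dimension $4$ (resp.\ $3$) $=$ rank of $\mathfrak{d}_4$ (resp.\ $\mathfrak{b}_3$), and a basis of semisimple elements that commute — commutativity being immediate since $[\la_{g_i}, \la_{g_i}] \subseteq \la_e = 0$.

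Second, for part (i), I would exhibit the four (resp.\ three) listed $\varphi$'s as lying in the appropriate homogeneous component: $\varphi_{e_0, e_i} \in (\mathfrak{d}_4)_{g_i}$ since $g_0 + g_i = g_i$, and $\varphi_{e_j, e_{i*j}} \in \la_{g_i}$ since $g_j + g_{i*j} = g_j + g_i + g_j = g_i$ (using $g_a + g_b = g_{a*b}$), and likewise for the other two. Since $j, k, j*k$ together with the indices $i*j, i*k, i*j*k$ account for all of $I \setminus \{i\}$, these four (resp.\ three) vectors involve pairwise disjoint pairs of distinct basis octonions, so they are linearly independent; a dimension count ($\dim (\mathfrak{d}_4)_{g_i} = 4$, $\dim (\mathfrak{b}_3)_{g_i} = 3$) upgrades independence to a basis.

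Third, for part (ii) — which I expect to be the main computational obstacle — I would compute $\ad z$ and then $\ad^2 z$ on $(\mathfrak{d}_4)_{g_j}$ using the single identity $[\psi, \varphi_{x,y}] = \varphi_{\psi(x), y} + \varphi_{x, \psi(y)}$ together with the explicit action of each $\varphi_{e_a, e_b}$ on basis octonions (namely $\varphi_{e_a, e_b}(e_c) = \delta_{bc} e_a - \delta_{ac} e_b$ when $a, b, c$ are distinct zero-trace indices, with the analogous formula involving $e_0$). Applying $\ad z$ to each element of $\mathcal{B}_{jik}$, one sees the action splits into two invariant $2$-dimensional blocks — the block $\{\varphi_{e_0, e_j}, \varphi_{e_i, e_{i*j}}\}$ and the block $\{\varphi_{e_k, e_{j*k}}, \varphi_{e_{i*j}, \dots}\}$ — because the octonion multiplication structure confines the relevant indices; carrying out the two $2 \times 2$ computations yields the claimed matrices. (For $\mathfrak{b}_3$ the $\varphi_{e_0, \cdot}$ row and column are absent, leaving a $1 \times 1$ block $(-a^2)$ and one $2 \times 2$ block.) The spectrum is then read off by diagonalizing each block: $\begin{pmatrix} -d^2 - a^2 & 2ad \\ 2ad & -d^2 - a^2 \end{pmatrix}$ has eigenvalues $-(d \mp a)^2 = -(a \pm d)^2$, and similarly for the $(b,c)$-block. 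The only real care needed is bookkeeping the signs coming from whether $a \prec b$ or $b \prec a$ in each relevant line of the Fano plane; choosing representatives consistently (as the notation $i \prec j$ is set up to allow) makes the cross terms come out as $+2ad$ and $+2bc$ rather than with varying signs, and in any case the spectrum is sign-insensitive in the off-diagonal entry.
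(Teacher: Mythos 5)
Your plan is correct and follows essentially the same route as the paper: graded-subspace containments plus a dimension count give the component descriptions and $\la_e=0$ (hence abelian components of dimension equal to the rank), and the identity $[\psi,\varphi_{x,y}]=\varphi_{\psi(x),y}+\varphi_{x,\psi(y)}$, iterated to $\ad^2z(\varphi_{x,y})=\varphi_{z^2(x),y}+2\varphi_{z(x),z(y)}+\varphi_{x,z^2(y)}$ and applied to $\mathcal{B}_{jik}$, produces exactly the two-block matrices you describe. Only trivial slips, none affecting the argument: with the paper's convention $\varphi_{e_a,e_b}(e_c)=\delta_{ac}e_b-\delta_{bc}e_a$ (your formula carries the opposite global sign, which is immaterial for $\ad^2$ and the spectra), the second invariant block is $\{\varphi_{e_k,e_{j*k}},\varphi_{e_{i*k},e_{i*j*k}}\}$, and no Fano-plane sign bookkeeping via $\prec$ is actually needed here, since the operators $\varphi_{x,y}$ are defined through the norm alone and never invoke octonion multiplication.
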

 
 \begin{proof} 
 If $g\ne e$ and $x\in \mathcal{O}_g$, then the multiplication operators $L_x$ and $R_x$ are homogeneous maps, which send $\mathcal{O}_{h}$ into $\mathcal{O}_{g+h}$. Thus $\Der(\mathcal{O})_{g}\oplus L_{\mathcal{O}_{g}}\oplus R_{\mathcal{O}_{g}}\subset ( \mathfrak{d}_4)_g$, and, this content is an equality since the sum of $\Der(\mathcal{O})_{g}\oplus L_{\mathcal{O}_{g}}\oplus R_{\mathcal{O}_{g}}$, for $g\ne e$, is the whole algebra $\f{d}_4$ by \eqref{eq_d4}. In particular $\dim( \mathfrak{d}_4)_g=4$ for all $g\ne e$. As $\dim\f{d}_4=28$, it also implies that $ ( \mathfrak{d}_4)_e=0$.
 Thus $[( \mathfrak{d}_4)_g,( \mathfrak{d}_4)_g]\subset ( \mathfrak{d}_4)_e=0$ and the homogeneous components are abelian and Cartan subalgebras.
  (Alternatively,  $[d,L_x]=L_{d(x)}$ and $[d,R_x]=R_{d(x)}$ for all $x\in\mathcal{O}_g$ and $d$ derivation, but, if $d\in \Der(\mathcal{O})_{g}$, then $d(x) \in\mathcal{O}_{2g}\cap \mathcal{O}_0=\mathbb C1 \cap \mathcal{O}_0=0$.)
   The same reasoning works for $\f{b}_3$: 
 $\Der(\mathcal{O})_{g}\oplus \ad_{\mathcal{O}_{g}}\subset ( \mathfrak{b}_3)_g$ if $g\ne e$, a content which is necessarily  an equality (the left-sum fills $\f{b}_3$). So 
 $ ( \mathfrak{b}_3)_e=0$ (alternatively, $ ( \mathfrak{b}_3)_e$ is contained in $ ( \mathfrak{d}_4)_e=0$) and $( \mathfrak{b}_3)_g$ abelian.
 In terms of $\varphi_{x,y}$-operators, $\sum_{h\in G}\varphi_{\mathcal{O}_h,\mathcal{O}_{g+h}}\subset ( \mathfrak{d}_4)_g$ and the equality follows from the fact that $\sum_{g\ne e}\sum_{h\in G}\varphi_{\mathcal{O}_h,\mathcal{O}_{g+h}}=\varphi_{\mathcal{O},\mathcal{O}}=\mathfrak{d}_4$ (we can take the sum with $g\ne e$, since 
 $\varphi_{x,x}=0$ and $\mathcal{O}_g$ is one-dimensional).

 Note that 
 $ z=d\varphi_{e_{0},e_{i}}+a\varphi_{e_{j},e_{i*j}}+b\varphi_{e_{k},e_{i*k}}+c\varphi_{e_{j*k},e_{i*j*k}}\in ( \mathfrak{d}_4)_{g_i}$ satisfies
  $$
 \begin{array}{llll}
 z(e_0)=de_{i},& z(e_{j})=ae_{i*j}, & z(e_{k})=be_{i*k},& z(e_{j*k})=ce_{i*j*k},\\
  z(e_{i})=-de_0,\qquad\quad & z(e_{i*j})=-ae_{j},\qquad\quad & z(e_{i*k})=-be_{k},\qquad\quad & z(e_{i*j*k})=-ce_{j*k}.
 \end{array}
 $$
 From here it is clear that $\mathcal{B}_{ijk}$ is a linearly independent set, and necessarily a basis of $( \mathfrak{d}_4)_{g_i}$.
In particular $e_{0}$ and $e_{i}$ are eigenvectors of $z^2$ related to the eigenvalue $-d^2$,
$e_{j}$ and $e_{i*j}$ are eigenvectors of $z^2$ related to the eigenvalue $-a^2$,
$e_{k}$ and $e_{i*k}$ are eigenvectors of $z^2$ related to the eigenvalue $-b^2$ and 
$e_{j*k}$ and $e_{i*j*k}$ are eigenvectors of $z^2$ related to the eigenvalue $-c^2$.
In order to describe $\ad^2z\vert_{\la_{g_j}}   $, note that 
\begin{equation}\label{eq_auxiliar}
[z ,[ z ,\varphi_{x,y}]]=\varphi_{z ^2(x),y}+2\varphi_{z (x),z (y)}+\varphi_{x,z ^2(y)}.
\end{equation}
Thus $\ad^2z(\varphi_{e_{0},e_{j}})=\varphi_{-d^2e_{0},e_{j}}+2\varphi_{de_{i},ae_{i*j}}+\varphi_{e_{0},-a^2e_{j}}$, which provides the first column
of $\ad^2z\vert_{\la_{g_j}}   $ relative to the basis 
$\mathcal{B}_{jik}= \{\varphi_{e_{0},e_{j}},\varphi_{e_{i},e_{i*j}},\varphi_{e_{k},e_{j*k}},\varphi_{e_{i*k},e_{i*j*k}} \}$. 
Following \eqref{eq_auxiliar}, we have
\begin{equation}\label{eq_calculos}
\begin{array}{l}
\ad^2z(\varphi_{e_{i },e_{i*j }})=\varphi_{-d^2e_{ i},e_{i*k}}+2\varphi_{-de_{ 0},-ae_{j }}+\varphi_{e_{ i},-a^2e_{i*j}}\,,\vspace{1pt}\\
\ad^2z(\varphi_{e_{k },e_{ j*k}})=\varphi_{-b^2e_{ k},e_{j*k}}+2\varphi_{be_{i*k },ce_{i*j*k }}+\varphi_{e_{k },-c^2e_{j*k}}\,,\vspace{1pt}\\
\ad^2z(\varphi_{e_{i*k },e_{ i*j*k}})=\varphi_{-b^2e_{i*k },e_{ i*j*k}}+2\varphi_{-be_{k },-ce_{ j*k}}+\varphi_{e_{ i*k},-c^2e_{ i*j*k}}\,.
\end{array}
\end{equation}
This provides the matrix given in (ii). 
For computing the matrix of $\ad^2z'\vert_{\la_{g_j}}   $ for $\la=\f{b}_3$, relative to the basis $\mathcal{B}'_{jik}$, we can take advantage of the computations in \eqref{eq_calculos}, simply imposing $d=0$. 
Finally, the assertions about the sets of eigenvalues are completely trivial known the matrices in some basis. 
 \end{proof}

 \begin{lemma}\label{co_lasnuestrassongood}
 For any $\la\in\{\mathfrak{d}_4,\mathfrak{b}_3  \}$, the homogeneous components of $\Gamma_\la$ satisfy 
  \begin{itemize}
   \item[\rm(i)] $\la_e=0$;
   \item[\rm(ii)] $[\la_{g_i},\la_{g_j}]=\la_{g_i+g_j}$ if $i,j\in I$;
   \item[\rm(iii)] For $i,j,k$  with $j\ne i$ and $k\notin\ell_{ij}$, there exist $x \in \la_{g_i}, \, y\in \la_{g_j}, \, z\in  \la_{g_k} $  such that $[x,[y,\, z]], \, [y,[z,\, x]]$ are linearly independent.
     \end{itemize}
  
 \end{lemma}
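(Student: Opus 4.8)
The plan is to read off all three items directly from Lemma~\ref{le_corchete}, which already provides explicit bases of the homogeneous components together with explicit formulas for the relevant brackets. Item (i) is literally part of the displayed formulas in Lemma~\ref{le_corchete}: $(\mathfrak d_4)_e=0$ and $(\mathfrak b_3)_e=0$. For item (ii), since $\la_{g_i}$ and $\la_{g_j}$ are nonzero homogeneous components with $g_i+g_j=g_{i*j}\ne e$ (note $i\ne j$ forces $g_i\ne g_j$, and $g_i+g_j\ne e$ since the labelling satisfies $g_i+g_j=g_{i*j}$ with $i*j\in I$), it suffices to exhibit a single pair of homogeneous elements with nonzero bracket, because $\la_{g_i+g_j}$ need only be reached as a subspace and $[\la_{g_i},\la_{g_j}]\subseteq\la_{g_i+g_j}$ already holds; then one compares dimensions, all nonzero components having dimension $4$ (for $\f d_4$) or $3$ (for $\f b_3$). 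Using the identity $[\psi,\varphi_{x,y}]=\varphi_{\psi(x),y}+\varphi_{x,\psi(y)}$ recalled before Lemma~\ref{le_corchete}, one checks e.g. that $[\varphi_{e_0,e_i},\varphi_{e_0,e_j}]=\varphi_{e_i,e_j}\ne 0$ (or, for $\f b_3$, a bracket of two $\varphi$'s with indices in $I$), which shows $[\la_{g_i},\la_{g_j}]\ne 0$; since this bracket lies in $\la_{g_{i*j}}$ and $[\la_{g_i},\la_{g_j}]$ is a submodule for the action of the abelian Cartan $\la_{g_i}$ acting via $\ad$, the eigenvalue computation of Lemma~\ref{le_corchete}(ii) forces the bracket to fill the whole $4$- (resp.\ $3$-) dimensional space $\la_{g_{i*j}}$. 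Alternatively, and more robustly, one simply writes out $[\mathcal B_{ijk}^{(\prime)},\mathcal B_{jik}^{(\prime)}]$ explicitly with the $\varphi$-bracket identity and observes the resulting $\varphi$'s span $\la_{g_{i*j}}$.

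For item (iii), the cleanest route is to take $x=\varphi_{e_0,e_i}\in(\f d_4)_{g_i}$ (resp.\ $x=\varphi_{e_j,e_{i*j}}\in(\f b_3)_{g_i}$), $y=\varphi_{e_0,e_j}$, $z=\varphi_{e_0,e_k}$ — noting that these live in the correct components precisely because $j\ne i$ and $k\notin\ell_{ij}$ guarantees the index sets are as required — and to compute $[x,[y,z]]$ and $[y,[z,x]]$ using $[\psi,\varphi_{x,y}]=\varphi_{\psi(x),y}+\varphi_{x,\psi(y)}$ twice. One expects $[y,z]=[\varphi_{e_0,e_j},\varphi_{e_0,e_k}]=\varphi_{e_j,e_k}\in\la_{g_{j*k}}$, and then $[x,[y,z]]$ and $[y,[z,x]]$ come out as explicit (small) combinations of $\varphi$'s sitting in $\la_{g_i+g_j+g_k}$; since $i,j,k$ are affinely independent in $\bb Z_2^3$, $g_i+g_j+g_k\ne e$, so this target component is again nonzero, and a direct inspection of the two vectors shows they are not proportional — e.g. they involve different basis $\varphi$'s, or have a non-degenerate $2\times 2$ coefficient matrix. (If the most naive choice of $x$ for $\f b_3$ gave proportional vectors, one falls back on choosing $x$ among the other basis vectors of $(\f b_3)_{g_i}$, which is permitted since the statement only asks for the \emph{existence} of such $x,y,z$.)

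The only mildly delicate point is bookkeeping with signs and with the $*$-operation on indices: one must be careful that, for instance, $i*(j*k)$, $(i*j)*k$ etc.\ are computed consistently and that the seven-line structure $\mathbf L$ of the Fano plane is respected so that the $\varphi$'s produced really do land in the claimed components. I do not expect any conceptual obstacle — everything is forced by Lemma~\ref{le_corchete} — so the write-up will mostly consist of recording one explicit choice of $x,y,z$ in each case and the two resulting brackets, then observing linear independence. The phrasing ``for any $\la\in\{\f d_4,\f b_3\}$'' will be handled uniformly by treating the $\f d_4$ computation and specializing $d=0$ (dropping the $\varphi_{e_0,-}$ terms) to recover $\f b_3$, exactly as was done in the proof of Lemma~\ref{le_corchete}.
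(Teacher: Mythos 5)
Items (i) and (ii) of your plan are essentially fine: (i) is indeed already contained in Lemma~\ref{le_corchete}, and your ``more robust alternative'' for (ii) — bracketing the explicit bases and seeing that the resulting $\varphi$'s span $\la_{g_{i*j}}$ — is exactly what the paper does (with $z=\varphi_{e_0,e_i}$, $w=\varphi_{e_k,e_{i*k}}$ acting on $\mathcal B_{jik}$). Your primary argument for (ii), however, is not correct as stated: $[\la_{g_i},\la_{g_j}]$ is \emph{not} an $\ad\la_{g_i}$-submodule (since $\ad\la_{g_i}$ sends $\la_{g_{i*j}}$ into $\la_{g_j}$), and even passing to $\ad^2 z$ does not help, because by Lemma~\ref{le_corchete}(ii) all the operators $\ad^2z\vert_{\la_{g_{i*j}}}$, $z\in\la_{g_i}$, are simultaneously block-diagonal with common invariant planes, so a single nonzero vector need not generate the whole component under them. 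This is harmless only because your fallback coincides with the paper's proof.

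The genuine gap is in (iii). Your concrete choice for $\f{d}_4$, $x=\varphi_{e_0,e_i}$, $y=\varphi_{e_0,e_j}$, $z=\varphi_{e_0,e_k}$, gives $[y,z]=\varphi_{e_j,e_k}$ and then $[x,[y,z]]=0=[y,[z,x]]$, since $\varphi_{e_0,e_i}$ annihilates $e_j,e_k$ (and $\varphi_{e_0,e_j}$ annihilates $e_k,e_i$); moreover your proposed $y,z$ do not even lie in $\f{b}_3$, as they do not kill $e_0$. Worse, the fallback of trying other single basis vectors cannot succeed: if $x,y,z$ are each a single $\varphi_{e_r,e_s}$, then whenever the iterated brackets are nonzero they are single $\varphi$'s supported on the symmetric difference of the three index pairs, so $[x,[y,z]]$ and $[y,[z,x]]$ are always proportional. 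Hence (iii) forces elements that are genuine linear combinations of $\varphi$'s, and the computation is no longer the routine bookkeeping you anticipate. The paper avoids all of this with one observation you missed: the gradings are compatible, $\Der(\mathcal{O})_{g}\subset(\f{b}_3)_g\subset(\f{d}_4)_g$, so the triple of homogeneous derivations already produced for $\Gamma_{\f{g}_2}$ in \cite[Lemma~2.2]{Thomas1} works verbatim for both $\f{b}_3$ and $\f{d}_4$. Your write-up should either invoke that inclusion or exhibit suitable sums of $\varphi$'s (e.g.\ of the type appearing in Eq.~\eqref{basedelossl2}); as proposed, the verification of (iii) would fail.
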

 
 \begin{proof}
 Item (i) has been previously discussed. Now
 compute, for the elements in   $(\f{d}_4)_{g_i}$, $z=\varphi_{e_0,e_i}$ and $w=\varphi_{e_{k},e_{i*k}}$, 
 $$
 \begin{array}{ll}
 \ad z(\varphi_{e_{0},e_{j}})=\varphi_{e_{i},e_{j}},&
  \ad w(\varphi_{e_{k},e_{j*k}})=\varphi_{e_{i*k},e_{j*k}},\\
   \ad z(\varphi_{e_{i},e_{i*j}})=-\varphi_{e_{0},e_{i*j}},\qquad\qquad&
    \ad w(\varphi_{e_{i*k},e_{i*j*k}})=-\varphi_{e_{k},e_{i*j*k}}.
 \end{array}
 $$
 Thus the basis $\mathcal{B}_{i*j,j,k}$ of $ \la_{g_{i*j}}$ is contained in $ [\la_{g_i},\la_{g_j}]$ and hence $\la_{g_i+g_j}\subset [\la_{g_i},\la_{g_j}]$, for $\la=\f{d}_4$.   
 We obtain (ii) for  $\la=\f{b}_3$ in a similar way. 
 
 Item (iii) is clear because we can even find $x \in \Der(\mathcal{O})_{g_i}, \, y\in \Der(\mathcal{O})_{g_j}, \, z\in  \Der(\mathcal{O})_{g_k} $ with that property (\cite[Lemma~2.2]{Thomas1}). Recall that the three gradings are compatible since 
 $\Der(\mathcal{O})_{g}\subset (\mathfrak{b}_3)_g\subset  (\mathfrak{d}_4)_g $ for all $g\in\bb Z_2^3$.
  \end{proof}

  \begin{lemma}\label{le_lineacopiassl2}
  For any $i\ne j$, the subalgebra $\mathcal{H}_{ij}:=\la_{g_i}\oplus \la_{g_j}\oplus \la_{g_i+g_j}$ is semisimple, more precisely  it is the direct sum of $r$ copies of ideals isomorphic to $\mathfrak{sl}(2,\bb C)$, for $r$ the rank of $\la$ ($r=4,3,2$ if $\la=\mathfrak{d}_4,\mathfrak{b}_3 ,\mathfrak{g}_2$, respectively). 
  \end{lemma}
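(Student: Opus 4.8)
The plan is to identify $\mathcal H_{ij}$, for $\la\in\{\mathfrak{d}_4,\mathfrak{b}_3\}$, with a direct sum of two orthogonal Lie algebras of small dimension, and then to read off the decomposition into copies of $\mathfrak{sl}(2,\bb C)$ from the classical low-rank coincidences $\mathfrak{so}(3,\bb C)\cong\mathfrak{sl}(2,\bb C)$ and $\mathfrak{so}(4,\bb C)\cong\mathfrak{sl}(2,\bb C)\oplus\mathfrak{sl}(2,\bb C)$; the case $\la=\mathfrak{g}_2$ will then follow by restriction.

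First I would set $U:=\mathcal O$ if $\la=\mathfrak{d}_4$ and $U:=\mathcal O_0$ if $\la=\mathfrak{b}_3$, so that $\la=\varphi_{U,U}$ is the full orthogonal Lie algebra of $(U,n\vert_U)$ (for $\mathfrak{b}_3$ one views its elements as maps of $\mathcal O$ annihilating $1$, equivalently as their restrictions to $\mathcal O_0$), and by Lemma~\ref{le_corchete} one has $\la_g=\sum_{h\in G}\varphi_{U_h,U_{g+h}}$ with $U_h:=\mathcal O_h\cap U$. Next, fixing $i\ne j$, I would work with the rank-two subgroup $H:=\langle g_i,g_j\rangle\le G$, whose nonzero elements are exactly $g_i,g_j,g_{i*j}$. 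Since $H$ has precisely two cosets in $G$, namely $H$ itself and its complement, the space $U$ splits as an $n$-orthogonal sum $U=U'\oplus U''$ with $U':=\sum_{g\in H}U_g$ and $U'':=\sum_{g\notin H}U_g$ (orthogonality because distinct homogeneous components of $\mathcal O$ are $n$-orthogonal); concretely $U''=\langle e_k,e_{i*k},e_{j*k},e_{i*j*k}\rangle$ for any $k\notin\ell_{ij}$, and $U'=\langle e_i,e_j,e_{i*j}\rangle$ if $\la=\mathfrak{b}_3$, $U'=\langle 1,e_i,e_j,e_{i*j}\rangle$ if $\la=\mathfrak{d}_4$. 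Thus $\dim U'=\dim U''=4$ for $\mathfrak{d}_4$, and $\dim U'=3$, $\dim U''=4$ for $\mathfrak{b}_3$, and $n$ restricts to a nondegenerate form on each summand since the $e_m$ (and $1$) are orthonormal.

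The key step is the identity $\mathcal H_{ij}=\varphi_{U',U'}\oplus\varphi_{U'',U''}$. For the inclusion ``$\subseteq$'', in $\la_{g_p}=\sum_h\varphi_{U_h,U_{g_p+h}}$ (with $p\in\{i,j,i*j\}$) each summand has both arguments in $U'$ when $h\in H$, since then $g_p+h\in H$ as well, and both in $U''$ when $h\notin H$. For ``$\supseteq$'', I would use that $\varphi_{U',U'}=\mathfrak{so}(U',n\vert_{U'})$ (resp.\ $\varphi_{U'',U''}=\mathfrak{so}(U'',n\vert_{U''})$) is spanned by the $\varphi_{e_m,e_n}$ with $m\ne n$ and $g_m,g_n$ in a common coset; for such indices $g_m+g_n\in\{g_i,g_j,g_{i*j}\}$, whence $\varphi_{e_m,e_n}\in\la_{g_m+g_n}\subseteq\mathcal H_{ij}$. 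Moreover these two summands commute and are ideals of $\mathcal H_{ij}$: an operator of the first type kills $U''$ and one of the second type kills $U'$, so every bracket of the two annihilates $U=U'\oplus U''$. Hence $\mathcal H_{ij}\cong\mathfrak{so}(U',n\vert_{U'})\oplus\mathfrak{so}(U'',n\vert_{U''})$, which by the coincidences above equals $\mathfrak{sl}(2,\bb C)^{\oplus 4}$ for $\mathfrak{d}_4$ and $\mathfrak{sl}(2,\bb C)^{\oplus 3}$ for $\mathfrak{b}_3$; the $\mathfrak{sl}(2,\bb C)$-ideals of the two $\mathfrak{so}$-summands are ideals of $\mathcal H_{ij}$, giving the asserted $r=4$ and $r=3$ copies.

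For $\la=\mathfrak{g}_2$ (so $r=2$) I would not recompute anything: as the three gradings are compatible, $\Der(\mathcal O)_g\subset(\mathfrak{d}_4)_g$ for all $g$, one has $\mathcal H_{ij}^{\mathfrak{g}_2}=\mathcal H_{ij}^{\mathfrak{d}_4}\cap\Der(\mathcal O)$, which is exactly the stabilizer in $\mathfrak{g}_2$ of the quaternion subalgebra $U'=\langle 1,e_i,e_j,e_{i*j}\rangle$ of $\mathcal O$; this stabilizer is classically known to be isomorphic to $\mathfrak{so}(4,\bb C)\cong\mathfrak{sl}(2,\bb C)\oplus\mathfrak{sl}(2,\bb C)$ (see, e.g., \cite{Thomas1,EK13}). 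The only point requiring care is the combinatorial bookkeeping of the second paragraph — that $H$ has only the complement as a further coset and that $U''$ is precisely $\langle e_k,e_{i*k},e_{j*k},e_{i*j*k}\rangle$ — but this is routine once the $\bb Z_2^3$-labelling and the rule $g_a+g_b=g_{a*b}$ are invoked; the Lie-theoretic content is then immediate.
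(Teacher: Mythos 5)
Your proof is correct, but it takes a genuinely different route from the paper's. The paper proceeds by brute force: it writes down explicit bases $\{x_s,y_s,z_s,w_s\}$ of $(\mathfrak{d}_4)_{g_s}$ (resp.\ $\{u_s,z_s,w_s\}$ of $(\mathfrak{b}_3)_{g_s}$) for $s\in\ell_{ij}$, obtained from $\mathcal B_{stk}$ by an explicit change of basis, and verifies directly with the $\varphi_{x,y}$-calculus that $[a_s,a_{t}]=\pm a_{s*t}$ while distinct letters commute, so that each letter spans an $\mathfrak{sl}(2,\bb C)$-ideal; the $\mathfrak g_2$ case is quoted from \cite[Lemma~2.1]{Thomas1}. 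You instead observe that $H=\langle g_i,g_j\rangle$ has index $2$ in $G$, split $U$ into the two nondegenerate orthogonal summands $U'\perp U''$ supported on the two cosets, identify $\mathcal H_{ij}=\varphi_{U',U'}\oplus\varphi_{U'',U''}\cong\mathfrak{so}(U')\oplus\mathfrak{so}(U'')$ as a sum of two commuting ideals, and finish with the classical coincidences $\mathfrak{so}(3,\bb C)\cong\mathfrak{sl}(2,\bb C)$ and $\mathfrak{so}(4,\bb C)\cong\mathfrak{sl}(2,\bb C)^{\oplus2}$; all the intermediate identifications (the two inclusions, the orthogonality, the coset bookkeeping) check out against Lemma~\ref{le_corchete}. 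Your argument is more conceptual and explains \emph{why} the statement holds; it also makes the semisimplicity and the count $r=\mathrm{rank}(\la)$ transparent. What the paper's computation buys in exchange is the explicit adapted basis itself: the elements $x_s,y_s,z_s,w_s,u_s$ are reused in Remark~\ref{re_fiij} to define the maps $\varphi_{ij}$ on which Example~\ref{ex_excepcion} and Lemma~\ref{lematitas} depend, so a proof of Lemma~\ref{le_lineacopiassl2} that does not exhibit them would force that construction to be done elsewhere. Your treatment of $\mathfrak g_2$ via the stabilizer of the quaternion subalgebra $U'$ is also fine (and consistent with the paper, which simply cites \cite{Thomas1} for that case).
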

  
  \begin{proof}
 (The $\mathfrak{g}_2$-case has been proved in \cite[Lemma~2.1]{Thomas1}, exhibiting concrete derivations.)
We can assume without loss of generality that $i\prec j$.
  Fix $k\notin\ell_{ij}$. For any $s\in\ell_{ij}=\{i,j,i*j\}$, take $t$ the only index in $\ell_{ij}$ with $s\prec t$, and take the basis 
  $\{x_s,y_s,z_s,w_s\}$ of   $ ( \mathfrak{d}_4)_{g_s}$ 
  given as follows:
 \begin{equation}\label{basedelossl2}
 \begin{array}{ll}
 x_{s}:=\frac12\big(\varphi_{e_{0},e_{s}}+ \varphi_{e_{t},e_{s*t}}\big),&
 y_{s}:=-\frac12\big(\varphi_{e_{0},e_{s}}- \varphi_{e_{t},e_{s*t}}\big),\\
 z_{s}:=\frac12\big(\varphi_{e_{k},e_{s*k}}+ \varphi_{e_{t*k},e_{s*t*k}}\big), \qquad\ &
 w_{s}:=-\frac12\big(\varphi_{e_{k},e_{s*k}}- \varphi_{e_{t*k},e_{s*t*k}}\big); 
 \end{array}
\end{equation}
 that is, the basis obtained from $\mathcal{B}_{stk}$ after the change of basis
 $\frac12\tiny\begin{pmatrix}1&-1&0&0\\1&1&0&0\\0&0&1&-1\\0&0&1&1\end{pmatrix}$.
 Similarly,  consider  ($s\prec t$, both indices in $\ell_{ij}$)
 $$
 u_s:=\varphi_{e_{t},e_{s*t}},
 $$
  and the basis of $ ( \mathfrak{b}_3)_{g_s}$ given by $\{u_s,z_s,w_s\}$, obtained from $\mathcal{B}'_{stk}$ after the change of basis $\frac12  \tiny
  \begin{pmatrix}
  2 & 0 & 0 \\
  0 & 1 & 1 \\
  0 & 1 & -1\\
  \end{pmatrix}$.
  \normalsize
  Now it is straightforward to check that $[x_i,x_j]=x_{i*j}$, 
 $[y_i,y_j]=y_{i*j}$, $[z_i,z_j]=z_{i*j}$, $[w_i,w_j]=w_{i*j}$, $[u_i,u_j]=u_{i*j}$, 
and the same happens  for any cyclic permutation of the three indices in $\ell_{ij}$. 
  We can check, for instance, for $s\prec t$, 
    $$
    \begin{array}{ll}
  [x_s,x_{t}]&=\frac14[\varphi_{e_{0},e_{s}}+ \varphi_{e_{t},e_{s*t}},\varphi_{e_{0},e_{t}}+ 
  \varphi_{e_{s*t},e_{s}}]\\
  &=\frac14(\varphi_{\varphi_{e_{0},e_{s}}(e_{0}),e_{t}}+  \varphi_{e_{s*t},\varphi_{e_{0},e_{s}}(e_{s})}+ \varphi_{e_{0},\varphi_{e_{t},e_{s*t}}(e_{t})}+ \varphi_{\varphi_{e_{t},e_{s*t}}(e_{s*
  t}),e_{s}})\\
  &=\frac14( \varphi_{e_{s},e_{t}}-\varphi_{e_{s*t},e_{0}}+ \varphi_{e_{0},e_{s*t}}-\varphi_{e_{t},e_{s}}) \\
  &=\frac24(\varphi_{e_{0},e_{s*t}}+ \varphi_{e_{s},e_{t}})=x_{s*t}.\\
  \end{array}
  $$
  Also, any bracket   $[a_s,b_{s'}]=0$ if $s,s'\in\ell_{ij}$, both if  $a\ne b\in\{x,y,z,w\}$  and   if $a\ne b\in\{u,z,w\}$.
  This means that $\bb Ca_{i}\oplus\bb Ca_{j}\oplus\bb Ca_{i*j}$
   is one of the required simple ideals for any choice of  $a \in\{x,y,z,w\}$ (respectively, $a\ \in\{u,z,w\}$).
 \end{proof}
 
 \begin{remark}\label{re_fiij}
 This lemma permits  consideration, for any $i\prec j$ and $\la=\mathfrak{d}_4$, of the linear map $\varphi_{ij}\in\mathrm{Aut}(\mathcal{H}_{ij})$ defined by 
 $$
 \varphi_{ij}:a_i\mapsto -a_j,\quad
 a_j\mapsto a_i,\quad
 a_{i*j}\mapsto a_{i*j},\qquad \forall a\in\{x,y,z,w\},
 $$
 with the notations in Eq.~\eqref{basedelossl2}.
 This is an automorphism of the semisimple subalgebra $\mathcal{H}_{ij}\cong 4\mathfrak{sl}(2,\mathbb C)$.
 We will also use the same notation  $\varphi_{ij}$  if  $\la=\mathfrak{b}_3$ (then $a\ \in\{u,z,w\}$).
 We will consider in \S\ref{se_resultadosengrcont} certain linear extensions  to the whole Lie algebra $\la=\mathfrak{d}_4$ (respectively, $\la=\mathfrak{b}_3$) which are not automorphisms of $\la$  but 
  are still   very convenient for us.
 \end{remark}

 Another common feature among these gradings is  their shared Weyl group. Recall that, if $\Gamma$ is a $G$-grading on an algebra $A$, the following groups are remarkable:
 \begin{itemize}
 \item $\Aut(\Gamma) = \{f \in \Aut(A) \colon \mbox{for any } g \in G \mbox{ there exists } g' \mbox{ with  } f(A_g)\subseteq A_{g'}\}$; 

 \item $\Stab(\Gamma)=\{f\in \Aut(A)\colon f(A_g)\subseteq A_g,\ \forall \, g\in A\}$; 
 \item $\weyl(\Gamma)=\Aut(\Gamma)/\Stab(\Gamma)$, the so called \emph{Weyl group} of $\Gamma$. 
 \end{itemize}
 It is  well-known that 
$\weyl(\Gamma_{\mathcal{O}} )\cong \weyl(\Gamma_{\f{g}_2})\cong \Aut(\bb Z_2^3).  $
 For any group automorphism  $\alpha\colon \bb Z_2^3\to\bb Z_2^3$, there exists an algebra automorphism $f_\alpha\colon \mathcal{O}\to \mathcal{O}$ such that 
 $f_\alpha(\mathcal{O}_g)=\mathcal{O}_{\alpha(g)}$. 
 Moreover, the map $d \mapsto  f_\alpha df_\alpha^{-1}$ provides an automorphism of $\f{g}_2=\Der(\mathcal{O})$ which sends 
 $(\f{g}_2)_{\alpha}$ into $(\f{g}_2)_{\alpha(g)}$. 
 The natural extension    $\tilde f_\alpha\colon \f{d}_4\to\f{d}_4$, $\varphi\mapsto f_\alpha\varphi f_\alpha^{-1}$ is a Lie algebra automorphism too, 
 and satisfies $\tilde f((\f{d}_4)_g)=(\f{d}_4)_{\alpha(g)}$.
 Also, its restriction to $\f{b}_3$ satisfies $\tilde f((\f{b}_3)_g)=(\f{b}_3)_{\alpha(g)}$.
  Indeed, 
$\tilde f_\alpha (\varphi_{x,y})=\varphi_{f_\alpha(x),f_\alpha(y)}$.
 Hence $\weyl(\Gamma_{\f{b}_3} )\cong \weyl(\Gamma_{\f{d}_4})\cong \Aut(\bb Z_2^3)  $ too.	
 This is the general linear group $\rm{GL}(3,2)$, which is a renowned simple group of order 168, the only simple group of such order, as proved in \cite{Dummit}, which thus coincides with $\rm{PSL}(2,7)$, the automorphism group of the Fano plane.  
 It is not difficult to find a concrete isomorphism $\rm{GL}(3,2)\cong \rm{PSL}(2,7)$: 
 for any $\alpha\in  \Aut(\bb Z_2^3)$, there is $\sigma\colon I\to I$ with $\alpha({g_i})=g_{\sigma(i)}$ ($\alpha({g_0})=g_{0}$), 
 and clearly $\sigma(i*j)=\sigma(i)*\sigma(j)$. Thus $\sigma$ is a collineation of the Fano plane. 
(Some stories and interpretations about the isomorphism  $\rm{GL}(3,2)\cong \rm{PSL}(2,7)$ can be found in \cite{massobregrupo}.)


     \section{Graded contractions of good $\bb Z_2^3$-gradings}  \label{se_resultadosengrcont}

     On one hand, our aim is to extend the results of \cite{Thomas1}   to $\Gamma_{\f{b}_3}$ and $\Gamma_{\f{d}_4}$, and, on the other hand, 
      we also seek to explore the extent to which these results can be applied to other 
       $\bb Z_2^3$-gradings.
     
     \begin{define}
     A $G=\bb Z_2^3$-grading $\Gamma$ on  a Lie algebra $\la$ will be called a \emph{good} grading  if it satisfies:
     \begin{enumerate}
     \item[(i)] $\la_e=0$;
     \item[(ii)]  $[\la_g,\la_h]=\la_{g+h}$ for all $g\ne h\in G\setminus\{e\}$;
     \item[(iii)]  If  $\langle g, h, k \rangle=G$, then there exist $x\in\la_g$, $y\in\la_h$ and $z\in\la_k$ such that 
     the set $\{[x,[y,z]],[y,[z,x]]\}$ is   linearly independent.
     \end{enumerate}
      If  moreover    $\weyl(\Gamma)\cong\Aut(\bb Z_2^3)  $, then $\Gamma$ will be called a \emph{very good} grading.
         \end{define}

     The name \emph{good}  simply means convenient for our current purposes, although there are very nice $\bb Z_2^3$-gradings on simple Lie algebras not satisfying the above conditions. 
     For instance, the $\bb Z_2^3$-grading on  $\gla(\mathcal{O})$ (simple Lie algebra of type $A_7$) induced by $\Gamma_{\mathcal{O}}$ as in \eqref{eq_gradgl}, does not satisfy the condition (i).
      Instead, our remarkable gradings $\Gamma_{\f{g}_2}$, $\Gamma_{\f{b}_3}$ and $\Gamma_{\f{d}_4}$ are all examples of very good gradings.
      Assume from now on,  throughout the remainder of the manuscript,  that $\Gamma$ is a good grading on a Lie algebra $\la$. 
      
     \begin{define}
We call $\ep\colon G\times G \to \bb{C}$ an \emph{admissible} map if $\ep(g, g) = \ep(e, g) = \ep(g, e) = 0$, for all $g \in G$.
\end{define}

 Admissible maps are important for our classifications due to the following result.

\begin{lemma}
For any graded contraction $\ep$   of   $\Gamma$,  
 there exists an admissible graded contraction $\ep'$ of $\Gamma$ which is equivalent to $\ep$.
\end{lemma}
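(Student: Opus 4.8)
The plan is to show that any graded contraction $\ep$ can be "normalized" on the diagonal and along the identity row/column without changing the isomorphism type of $\la^\ep$, using the good-grading hypothesis to rescale basis elements inside each homogeneous component. First I would observe what conditions (a1)–(a2) force on the values $\ep(g,g)$, $\ep(e,g)$ and $\ep(g,e)$. Since $\la_e=0$ (condition (i) of a good grading), the components $\la_e$ are trivial, so $\ep(e,g)$ and $\ep(g,e)$ only ever multiply brackets $[x,y]$ with one factor in $\la_e=0$; thus these values are completely unconstrained and, more importantly, irrelevant to the algebra $\la^\ep$. Hence without loss of generality we may simply redefine $\ep'(e,g)=\ep'(g,e):=0$ for all $g$, obtaining a map $\ep'$ which is still a graded contraction (the conditions (a1)–(a2) are unaffected, again because the only brackets that change are zero) and which gives literally the same algebra $\la^\ep=\la^{\ep'}$ — in particular $\ep'\approx\ep$, hence $\ep'\sim\ep$.

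The genuine work is the diagonal: we must arrange $\ep'(g,g)=0$ for $g\ne e$. Here I would distinguish two cases according to whether $\ep(g,g)=0$ already or not. If $\ep(g,g)\ne0$, the idea is to absorb the scalar by rescaling $\la_g$. Define $f\colon\la\to\la$ to be the identity on $\la_h$ for $h\ne g$ and multiplication by a nonzero scalar $\lambda$ on $\la_g$; then $f$ carries $\la^{\ep}$ to $\la^{\ep''}$ where $\ep''$ agrees with $\ep$ except that $\ep''(g,h)$ and $\ep''(h,g)$ are scaled for each $h$, and $\ep''(g,g)$ is scaled by $\lambda^{-1}$ relative to $\ep(g,g)$ (since the bracket $[x,y]$ with $x,y\in\la_g$ lands in $\la_e=0$ anyway, actually $\ep(g,g)$ multiplies only zero brackets!). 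This is the crucial simplification afforded by (i): because $[\la_g,\la_g]\subseteq\la_e=0$, the value $\ep(g,g)$ never multiplies a nonzero bracket, so we may freely set $\ep'(g,g):=0$ without altering $\la^\ep$ at all. Combining this with the identity-row/column normalization above, the single map $\ep'$ defined by $\ep'(g,h):=\ep(g,h)$ whenever $g\ne h$ and $g,h\ne e$, and $\ep'(g,h):=0$ otherwise, is admissible, is a graded contraction of $\Gamma$, and satisfies $\la^{\ep'}=\la^{\ep}$ as graded algebras; in particular $\ep'\approx\ep$ and so $\ep'\sim\ep$.

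The main obstacle — or rather, the point requiring care — is verifying that $\ep'$ as defined is still a graded contraction, i.e. that modifying $\ep$ to zero on $\{(g,g)\}$ and on $\{(e,g),(g,e)\}$ preserves conditions (a1) and (a2). For (a1) this is immediate: $\big(\ep'(g,h)-\ep'(h,g)\big)[x,y]$ either equals $\big(\ep(g,h)-\ep(h,g)\big)[x,y]$ (when $g\ne h$, $g,h\ne e$) or has $[x,y]=0$ (when $g=h$, since then $[x,y]\in\la_e=0$; or when $e\in\{g,h\}$, since $\la_e=0$). For (a2), the ternary values $\ep'(g,h,k)=\ep'(g,h+k)\ep'(h,k)$ change from $\ep(g,h,k)$ only when one of the pairs $(g,h+k)$, $(h,k)$ is diagonal or involves $e$; in every such case one checks that the corresponding nested bracket $[x,[y,z]]$ lies in a component $\la_e=0$ or otherwise vanishes by homogeneity degree, so both sides of (a2) remain valid. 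Since this bookkeeping is routine given $\la_e=0$, I would state it compactly rather than expand all cases. Thus the lemma follows, with $\ep'$ in fact \emph{strongly} equivalent to $\ep$.
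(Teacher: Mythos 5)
Your proposal is correct and is essentially the paper's own argument: you define exactly the same admissible map $\ep'$ (zero whenever $g$, $h$ or $g+h$ equals $e$; note that in $\bb Z_2^3$ the condition $g+h=e$ is just $g=h$) and observe that, since $\la_e=0$ and $[\la_g,\la_g]\subseteq\la_e$, the modified values only ever scale zero brackets, so $[\cdot,\cdot]^{\ep'}=[\cdot,\cdot]^{\ep}$ and hence $\ep'\approx\ep$. One small remark: the case-check of (a2) in your last paragraph is not actually needed—since $\la^{\ep'}=\la^{\ep}$ is already a Lie algebra, $\ep'$ is a graded contraction by definition—and as stated it is slightly off when exactly two of $g,h,k$ coincide (say $g=h\ne k$), where $[x,[y,z]]$ and $[y,[z,x]]$ need not vanish individually and one must instead invoke the Jacobi identity together with $[x,y]\in\la_e=0$.
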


The proof is  identical to \cite[Lemma~3.2]{Thomas1}, due to (i) (do not forget we are assuming that all the gradings are good). It is enough to define $\ep'(g,h)=\ep(g,h)$ if $g,h,g+h$ are all of them different from $e\in G$, and $\ep'(g,h)=0$ otherwise. 
 
Thus, classifying graded contractions up to equivalence is the same problem as that of classifying admissible graded contractions up to equivalence.
The advantage of dealing with admissible graded contractions is that they can be easily translated to a combinatorial setting, we mean, where the elements in $\la$ do not play any role.

\begin{prop}\label{ref_existeadmisible}
An admissible map $\ep\colon  G \times G \to \bb{C}$ is a graded contraction of   $\Gamma$ if and only if the following conditions hold for all $g, h, k \in G$:
\begin{enumerate}
\item[\rm (b1)] $\ep(g, h) = \ep(h, g)$,  
\item[\rm (b2)] $\ep(g, h, k) = \ep(k, g, h)$, provided that $G = \langle g, h, k \rangle$. 
\end{enumerate}

\end{prop}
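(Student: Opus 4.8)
The plan is to derive conditions (b1) and (b2) directly from the general graded-contraction conditions (a1) and (a2), specialized to an admissible map, by invoking the structural hypotheses (i)--(iii) of a good grading so that the vanishing of certain Lie-bracket expressions forces the vanishing of the scalar coefficients in front of them. The equivalence is an ``if and only if'', so both directions must be checked; the forward direction (a graded contraction satisfies (b1)--(b2)) is where the good-grading hypotheses do the work, while the converse (a1)--(a2) follow formally from (b1)--(b2) together with admissibility.

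For the forward direction, first observe that admissibility disposes of all degenerate cases: if $g=e$, $h=e$, or $g=h$, then at least one of $\la_g,\la_h,\la_{g+h}$ is zero by (i) (when a label is $e$) or the bracket lands in $\la_e=0$ (when $g=h$, since $g+h=e$), so (b1) and the relevant instances of (b2) hold trivially with both sides equal to $0$. So assume $g,h,g+h$ are all distinct from $e$. For (b1): by (ii), $[\la_g,\la_h]=\la_{g+h}\ne 0$, so we may pick homogeneous $x\in\la_g$, $y\in\la_h$ with $[x,y]\ne 0$; then (a1) reads $\big(\ep(g,h)-\ep(h,g)\big)[x,y]=0$, forcing $\ep(g,h)=\ep(h,g)$. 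For (b2), assume $G=\langle g,h,k\rangle$; then in particular $g,h,k$ are all $\ne e$ and pairwise distinct (a pair generating a proper subgroup). By hypothesis (iii) there exist $x\in\la_g$, $y\in\la_h$, $z\in\la_k$ with $[x,[y,z]]$ and $[y,[z,x]]$ linearly independent; plugging these into (a2),
\[
\big(\ep(g,h,k)-\ep(k,g,h)\big)[x,[y,z]]+\big(\ep(h,k,g)-\ep(k,g,h)\big)[y,[z,x]]=0,
\]
and linear independence gives $\ep(g,h,k)=\ep(k,g,h)$ (and also $\ep(h,k,g)=\ep(k,g,h)$, which is the same relation cycled). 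This is exactly (b2).

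For the converse, suppose $\ep$ is admissible and satisfies (b1)--(b2); we must show (a1) and (a2) hold for all $g,h,k$ and all homogeneous $x,y,z$. Condition (a1) is immediate from (b1) since $\ep(g,h)-\ep(h,g)=0$ identically. For (a2), fix $g,h,k$. If $\langle g,h,k\rangle=G$, then (b2) gives $\ep(g,h,k)=\ep(h,k,g)=\ep(k,g,h)$, so all three coefficients in (a2) vanish and the identity holds. If $\langle g,h,k\rangle$ is a proper subgroup, I will argue that every term $[x,[y,z]]$, $[y,[z,x]]$, $[z,[x,y]]$ appearing (after using the Jacobi identity in $\la$ to rewrite, if convenient) already vanishes for homogeneous $x\in\la_g$, $y\in\la_h$, $z\in\la_k$: indeed $[x,[y,z]]\in[\la_g,\la_{h+k}]$, and one checks using (i) that when $g,h,k$ lie in a proper subgroup of $\bb Z_2^3$ the relevant labels collapse — e.g. $g=e$, or $h+k=e$, or $g=h+k$ — each of which annihilates the bracket via $\la_e=0$. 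Here one must be slightly careful and enumerate the possible ``collisions'' forced by $\langle g,h,k\rangle\subsetneq G$ together with admissibility; this bookkeeping is the one mildly delicate point, but it is elementary. Assembling the two directions completes the proof.

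The main obstacle I anticipate is the converse in the non-generating case: making fully precise why, when $g,h,k$ generate a proper subgroup, every nested bracket in (a2) vanishes on homogeneous elements — this requires a short but careful case analysis of how three elements of $\bb Z_2^3$ can fail to generate (they lie in a subgroup of order $1$, $2$, or $4$), combined with the admissibility normalization that already killed the scalars in those cases anyway. Everything else is a direct substitution into (a1)--(a2) and an application of hypotheses (ii) and (iii). (I also expect the authors to simply cite the proof of the analogous statement for $\Gamma_{\f g_2}$ in \cite{Thomas1}, since only properties (i)--(iii) — i.e. goodness — are used.)
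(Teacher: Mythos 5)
Your forward direction is correct and is exactly what the paper has in mind (the paper does not write the argument out; it just observes that the proof of \cite[Proposition~3.4]{Thomas1} goes through verbatim because only properties (i)--(iii) of a good grading are used, which is also how you use them). The genuine gap is in the converse, precisely in the non-generating case that you flag as ``mildly delicate'': your proposed mechanism there is wrong. It is not true that when $\langle g,h,k\rangle\subsetneq G$ every nested bracket vanishes by a label collision, nor that admissibility has already killed the scalars. Take $g=h\ne k$, all three different from $e$. Then $[x,[y,z]]\in[\la_g,\la_{g+k}]\subseteq\la_k$ and $[y,[z,x]]\in\la_k$, and by (ii) these brackets are generically nonzero; moreover the coefficient $\ep(g,h,k)=\ep(g,g+k)\,\ep(g,k)$ is not forced to vanish for an admissible map (both pairs can lie in the support). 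So neither the vectors nor the scalars die, and your enumeration of collisions ($g=e$, $h+k=e$, $g=h+k$) misses this configuration.

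The identity (a2) still holds there, but for a different reason, which is the actual content of the delicate case: admissibility gives $\ep(k,g,h)=\ep(k,g+h)\ep(g,h)=\ep(k,e)\ep(g,g)=0$, condition (b1) gives $\ep(g,h,k)=\ep(g,g+k)\ep(h,k)=\ep(h,k+g)\ep(k,g)=\ep(h,k,g)$, so (a2) reduces to $\ep(g,g+k)\ep(g,k)\bigl([x,[y,z]]+[y,[z,x]]\bigr)=0$, and the bracket combination vanishes by the Jacobi identity in $\la$ because $[x,y]\in[\la_g,\la_g]\subseteq\la_e=0$. In the companion configurations $h=k\ne g$ and $g=k\ne h$ one of the two brackets does die by a label collision, and the coefficient of the surviving one vanishes by (b1) together with admissibility. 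The remaining non-generating cases (some label equal to $e$, all labels equal, or three distinct nonzero labels necessarily summing to $e$) do vanish bracketwise, as you say. So your proof is repairable, but as written the key step is incorrect: you must combine (b1), admissibility and the Jacobi identity of $\la$, not just $\la_e=0$; this is exactly the bookkeeping carried out in \cite[Proposition~3.4]{Thomas1}, which the paper cites in lieu of a proof.
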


The proof mimics the proof of \cite[Proposition~3.4]{Thomas1}, because the only relevant facts   used there were precisely the conditions (i), (ii) and (iii) satisfied by $\Gamma_{\mathfrak g_2}$.

\begin{remark}  
 For any admissible graded contraction $\ep\colon G\times G\to\bb C$, write $\ep_{ij}:=\ep(g_i,g_j)$ and $\ep_{ijk}:=\ep(g_i,g_j,g_k)$.
 Note that  $\ep_{ij} $ does not depend on the order of the indices, 
 so we can attach  to the graded contraction
  the map $\ep\colon X\to \bb C$  defined by 
$\ep({\{i,j\}}):=\ep_{ij}$, for
 $X:=\{\{i,j\}:1\le i<j \le7\}$ the set of the 21 edges of the Fano plane. 
Taking into account the above proposition,  
 there is a one-to-one correspondence between admissible graded contractions of $\Gamma$ 
 and maps $\ep\colon X\to \bb C$ such that the value $\ep_{ijk}:= \ep_{i,j*k}\ep_{jk}$ does not depend on the order of the indices whenever 
 $\langle g_i,g_j,g_k \rangle= G$ (equivalently, when the expression $\ep_{ijk}$ is well defined, that is, $j\ne k$, $i\notin\ell_{jk}$).
 From now on, we refer  interchangeably to the map $\ep\colon X\to \bb C$ and to the unique related admissible graded contraction.
\end{remark}

\begin{define}
For any admissible graded contraction $\ep$, 
define its \emph{support} by $\supp\ep:=\{\{i,j\}:\ep_{ij}\ne0\}\subset X.$ 
\end{define}

The support is an invariant under strong  equivalence.
\begin{lemma}\label{le_soportesiguales}
If $\ep\approx \ep'$ are admissible graded contractions of $\Gamma$, then $\supp\ep=\supp{\ep'}$.
\end{lemma}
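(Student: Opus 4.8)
The plan is to show that a strong equivalence $\ep \approx \ep'$ forces the two maps $\ep, \ep' : X \to \bb C$ to have the same zero set. Recall that $\ep \approx \ep'$ means there is a Lie algebra isomorphism $f \colon \la^\ep \to \la^{\ep'}$ with $f(\la_g) = \la_g$ for every $g \in G$. First I would fix $g_i$ and $g_j$ with $i \ne j$ and suppose $\{i,j\} \in \supp\ep$, i.e. $\ep_{ij} \ne 0$; the goal is to deduce $\ep'_{ij} \ne 0$, and then by symmetry (applying the same argument to $f^{-1}$) the reverse implication gives equality of supports.

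The key step is to exploit condition (ii) of a good grading together with the isomorphism property. Since $\ep_{ij} \ne 0$, the $\ep$-bracket satisfies $[\la_{g_i}, \la_{g_j}]^\ep = \ep_{ij}[\la_{g_i},\la_{g_j}] = \ep_{ij}\la_{g_i+g_j} = \la_{g_i+g_j}$, using $[\la_{g_i},\la_{g_j}] = \la_{g_i+g_j}$ from goodness and $\ep_{ij} \ne 0$. Now apply $f$: since $f$ is a homomorphism for the respective brackets and preserves each homogeneous component,
\[
\la_{g_i+g_j} = f(\la_{g_i+g_j}) = f([\la_{g_i},\la_{g_j}]^\ep) = [f(\la_{g_i}), f(\la_{g_j})]^{\ep'} = [\la_{g_i},\la_{g_j}]^{\ep'} = \ep'_{ij}\,[\la_{g_i},\la_{g_j}] = \ep'_{ij}\,\la_{g_i+g_j}.
\]
Since $\la_{g_i+g_j} \ne 0$ (again by goodness, as $g_i+g_j \ne e$ and the component is a nonzero Cartan subalgebra in our examples — in general condition (ii) with a third index forces it nonzero), this equality of subspaces forces $\ep'_{ij} \ne 0$. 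Hence $\{i,j\} \in \supp{\ep'}$, giving $\supp\ep \subseteq \supp{\ep'}$. Applying the identical reasoning to the inverse isomorphism $f^{-1} \colon \la^{\ep'} \to \la^{\ep}$ yields $\supp{\ep'} \subseteq \supp\ep$, and therefore $\supp\ep = \supp{\ep'}$.

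The argument is essentially immediate once the right characterization is isolated: the only subtlety is making sure $\la_{g_i+g_j} \ne 0$ so that $\ep'_{ij}\la_{g_i+g_j} = \la_{g_i+g_j}$ genuinely forces $\ep'_{ij} \ne 0$ rather than being vacuous — this is where goodness (condition (i) combined with (ii), or directly the explicit description of the components in Lemma~\ref{co_lasnuestrassongood}) is used. No obstacle of substance arises; the statement is really a warm-up observation recording that the support is a strong-equivalence invariant, to be used later when relating isomorphism classes to strong-isomorphism classes.
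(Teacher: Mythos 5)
Your argument is correct and essentially the same as the paper's: both apply the graded isomorphism to brackets of homogeneous elements and use condition (ii) of goodness to conclude $\ep_{ij}=0\Leftrightarrow\ep'_{ij}=0$, you phrasing it at the level of subspaces and the paper at the level of elements. The only slight imprecision is attributing the nonvanishing of $\la_{g_i+g_j}$ to condition (ii) alone; in general it is condition (iii) (or, for $\Gamma_{\f{b}_3}$, $\Gamma_{\f{d}_4}$, the explicit description of the components) that guarantees the homogeneous components are nonzero.
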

\begin{proof}
If $\varphi\colon \la^{\ep}\to\la^{\ep'}$ is a graded isomorphism, $x\in \la_{g_i}$ and $y\in \la_{g_j}$, then
$\ep_{ij}\varphi([x,y])=\ep'_{ij}[\varphi(x),\varphi(y)]$, so that $\ep_{ij}=0$ if and only if $\ep'_{ij}=0$, due to (ii).
\end{proof}

The supports are characterized by a certain absorbing property.
 If $k\notin\ell_{ij}$, we consider 
\[
P_{\{i,j,k\}} := \{\{i, j\}, \{j,k\},  \{k,i\}, \{i, j\ast k\}, \{j, k\ast i\}, \{k, i\ast j\}\}.
\]
\begin{define} \label{def_nice}
 A subset $T\subseteq X$ is called \emph{nice} if $\{i, j\},   \{i\ast j, k\} \in T$ implies $P_{\{i,j,k\}} \subseteq T$, for all $i, j, k \in I$ with $i\ne j$, $k\notin\ell_{ij}$.
 As proved in \cite[Proposition~3.10]{Thomas1}, the support of any admissible graded contraction is a nice set. 
\end{define}

Some remarkable examples of nice sets are, for any $\ell\in\mathbf{L}$ and for any $i\in I$,
\begin{itemize}
\item 
$X_{\ell}:=\{\gamma\in X:\gamma\subset \ell\}$;
\item 
$X_{\ell^C}:=\{\gamma\in X:\gamma\cap\ell=\emptyset\}$; 
\item $X_{(i)}=\{\gamma\in X:i\in \gamma\}$;
\item $X^{(i)}=\{\{j,k\}\in X:j*k=i\}$.
\end{itemize}
Also, if $k\notin\ell_{ij}$, the following set of cardinal 10 is nice,
$$T_{ijk}:= P_{\{i,j,k\}}\cup \{\{ i,i*j \},\{ i,i*k \},\{ i*j,i*k \},\{ i,i*j*k \} \}.$$  
 Moreover, the list of all possible supports follows.
 
\begin{theorem}\cite[Propositions~3.23 and 3.25]{Thomas1} 
The complete list of nice sets is: $X$,  $X\setminus  X_{\ell^C}$, $P_{\{i,j,k\}}$, $T_{ijk}$, $X^{(i)}$ and any subset of either $X_{\ell}$, or $X_{\ell^C}$, or $X_{(i)}$,   for some $\ell\in\mathbf{L}$, $i\ne j\in I$, $k\notin\ell_{ij}$.
\end{theorem}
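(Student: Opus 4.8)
The plan is to classify the nice subsets $T\subseteq X$ by an inductive argument on the structure forced by the absorbing property in Definition~\ref{def_nice}, using the combinatorics of the Fano plane and the high symmetry of the $\weyl$-action. First I would observe that the condition ``$\{i,j\},\{i*j,k\}\in T$ implies $P_{\{i,j,k\}}\subseteq T$'' can be rephrased: whenever $T$ contains two edges $\gamma_1,\gamma_2$ that lie on two distinct lines $\ell_1\ne\ell_2$ and whose ``third points'' $\ell_1\setminus\gamma_1$, $\ell_2\setminus\gamma_2$ together with the intersection point span enough, then a whole hexagon $P$ of six edges is forced. So the key dichotomy is: either $T$ is \emph{small} enough that no such configuration occurs — which, I expect, forces $T$ to sit inside one of the ``degenerate'' nice families $X_\ell$, $X_{\ell^C}$, $X_{(i)}$ — or $T$ contains some $P_{\{i,j,k\}}$ and then we must determine which nice sets contain a given hexagon.

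\textbf{Step 1: the hexagon-free case.} I would first analyze a nice set $T$ that contains no subset of the form $P_{\{i,j,k\}}$. The absorbing property then says that $T$ cannot contain two edges $\{i,j\}$ and $\{i*j,k\}$ with $k\notin\ell_{ij}$; equivalently, for every edge $\gamma=\{i,j\}\in T$ and its complementary point $m:=i*j$, no edge of $T$ passes through $m$ except possibly those contained in $\ell_{ij}$ itself. Translating this incidence restriction to the Fano plane, I would argue by a short case analysis on how many edges of $T$ lie on a fixed line $\ell$: if some line carries all of $T$ we are inside $X_\ell$; if $T$ avoids a line entirely the constraint pushes $T$ into $X_{\ell^C}$; and the remaining possibility is that all edges of $T$ pass through a common point, i.e. $T\subseteq X_{(i)}$. (The set $X^{(i)}$, whose three edges pairwise meet the forbidden pattern, must be checked to be nice directly — it is, because any two of its edges have complementary point $i$, so no new hexagon is triggered; this is the one ``hexagon-free'' family that is maximal without being a subset of the other three.)

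\textbf{Step 2: the case $P_{\{i,j,k\}}\subseteq T$.} Using that $\weyl(\Gamma)\cong\Aut(\bb Z_2^3)$ acts as the full collineation group $\mathrm{GL}(3,2)$ on the Fano plane and permutes nice sets, I may assume the hexagon is a fixed standard one. Then I would examine what happens when $T$ contains one more edge $\gamma$ outside this hexagon, and show that the absorbing property forces $T$ to grow, by successive applications of Definition~\ref{def_nice}, to one of $T_{ijk}$, $X\setminus X_{\ell^C}$, or all of $X$, depending on the ``type'' of the extra edge relative to the hexagon (there are only a few orbits of such edges under the stabilizer of the hexagon, which is where the symmetry reduces the bookkeeping). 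Each closure computation is a finite check: start from $P\cup\{\gamma\}$, repeatedly apply the absorbing rule, and see where it stabilizes. One must also verify that $P_{\{i,j,k\}}$, $T_{ijk}$ and $X\setminus X_{\ell^C}$ are themselves nice (i.e. already closed), which is again a bounded computation.

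\textbf{Main obstacle.} The delicate part is organizing Step~2 so that the finitely many closure computations are genuinely exhaustive: one must be sure that every way of adding edges to a hexagon has been accounted for, and that the three ``large'' nice sets $P_{\{i,j,k\}}\subsetneq T_{ijk}\subsetneq X\setminus X_{\ell^C}\subsetneq X$ are exactly the closures that arise, with nothing in between. Here the Fano-plane symmetry is essential — it collapses what would otherwise be dozens of cases into a handful of orbit representatives — but one still has to identify the orbits of configurations correctly and rule out spurious intermediate closed sets. I would expect the cleanest route is to fix a hexagon, compute the orbit of each candidate extra edge under its (small) stabilizer subgroup of $\mathrm{GL}(3,2)$, close up each representative once, and tabulate the results; the verification that these tables are complete is the real content, and it is precisely this that \cite[Propositions~3.23 and 3.25]{Thomas1} carried out.
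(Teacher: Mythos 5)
The paper does not actually prove this statement: it is imported verbatim from \cite[Propositions~3.23 and 3.25]{Thomas1}, so there is no internal proof to measure your sketch against. On its own terms, your architecture is the natural one and surely close to the cited argument: split into nice sets containing no hexagon $P_{\{i,j,k\}}$ (equivalently, for nice sets, containing no trigger pair $\{i,j\},\{i*j,k\}$ with $k\notin\ell_{ij}$) and those containing one, and handle the latter by finitely many closure computations modulo the $S_3$-stabilizer of the triangle $\{i,j,k\}$ inside $\mathrm{GL}(3,2)$. Your Step~2 is described correctly (one must check that $P_{\{i,j,k\}}$, $T_{ijk}$, $X\setminus X_{\ell^C}$ and $X$ are closed and that adjoining any single extra edge closes up to one of them, iterating along the chain), and you rightly flag exhaustiveness of that bookkeeping as the substantive content.

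The genuine gap is in Step~1. Your trichotomy for hexagon-free nice sets --- ``all of $T$ on one line / $T$ avoids a line / all edges concurrent'' --- is a restatement of the desired conclusion, not an argument; nothing in the sketch excludes a trigger-free configuration lying in none of the four families. The middle case is exactly where the work sits. For example, $T=\{\{1,2\},\{1,3\},\{2,3\}\}$ is trigger-free (its poles $1*2=5$, $1*3=6$, $2*3=7$ miss all three edges), is neither collinear nor concurrent nor inside any $X^{(i)}$, and it lands in $X_{\ell^C}$ only because $\{5,6,7\}$ happens to be a line of the Fano plane, i.e.\ because the diagonal points of a triangle in $PG(2,2)$ are collinear. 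A correct Step~1 must analyze pairs of edges: two meeting edges are always trigger-free; two disjoint trigger-free edges span four points no three of which are collinear (any collinear triple would put a pole on the other edge and fire the trigger), hence a $4$-arc whose complement is a line $\ell$, placing both edges in $X_{\ell^C}$ with both poles on $\ell$ --- and then one must show these local constraints globalize to $X_\ell$, $X_{\ell^C}$, $X_{(i)}$ or $X^{(i)}$. That elementary but nontrivial incidence argument is precisely what your sketch omits and what Proposition~3.23 of \cite{Thomas1} supplies.
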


To be precise \cite[Remark 3.29]{Thomas1}, there are 779 nice sets,  which thus are the possible supports of the admissible graded contractions of any good $\bb Z_2^3$-grading $\Gamma$, although the explicit list is not relevant for our purposes.
Conversely, each nice set provides an admissible graded contraction of $\Gamma$, as in \cite[Proposition~3.11]{Thomas1}. 

\begin{define}  
If $T\subseteq X$ is a nice set, define $\ep^T\colon G\times G\to\bb C$ by $\ep^T_{ij}=1$ if $\{i,j\}\in T$ and $\ep^T_{ij}=0$ otherwise. 
Observe that $\ep^T$ is an admissible graded contraction.
\end{define}

Our aim is to prove that the support  of a graded contraction determines its strong-equivalence class in most of the cases. 
 As a preview, we will say that whenever the nice set $T$ 
 is not any of the following (for some $j\ne i$)
 \begin{equation}\label{eq_malos}
 X_{(i)}, \qquad X_{(i)}\setminus\{\{i,j\} \}, \qquad X_{(i)}\setminus\{\{i,j\},\{i,i*j\} \},
\end{equation}
 then any admissible graded contraction with support $T$ is necessarily strongly equivalent to $\ep^T$. 
The main tool to prove it is the notion of equivalence by normalization, a concept inspired in \cite{checos}  which is  combinatorial in nature and avoids having to delve into the bracket of the Lie algebra in most cases.

\begin{define}
For any maps $\alpha\colon  G \to \mathbb{C}^\times$ and   $\ep\colon G\times G\to\bb C$,
define $\ep^\alpha\colon G\times G\to\bb C$ by 
$
\ep^\alpha(g, h) := \ep(g, h) \dfrac{\alpha(g)\alpha(h)}{\alpha(g + h)}
$.
If $\ep$ is a
 graded contraction   of $\Gamma$,   then  $\ep^\alpha$ so is.
In such case we write $\ep\sim_n\ep^\alpha $, and say that $\ep$ and $\ep^\alpha $ are \emph{equivalent by normalization.}
Note that $\ep^\alpha$ is admissible if and only if $\ep$  is also, and in that case $\supp{\ep}=\supp{\ep^\alpha} $. Besides, 
$\ep \sim_n \ep'$  implies  $\ep \approx \ep'$, because the map $f\colon \la^\ep\to\la^{\ep^\alpha}$ given by  $f\vert_{ \la_{g_i}}=\alpha(g_i)\id_{ \la_{g_i}}$   is a graded-isomorphism.
\end{define}

The following theorem, adapted from \cite[Theorem~4.1]{Thomas1} to our  current setting, describes the equivalence classes up to normalization of admissible graded contractions. 

\begin{theorem} \label{th_normaliz}
  
 If $i,j,k\in I$, $j\ne i$, $k\notin\ell_{ij}$, $\lambda, \lambda'\in\bb C^\times$, denote by 
 $\eta^\lambda\colon X\to \bb C ,$ $\mu^\lambda\colon X\to \bb C ,$ and $\beta^{\lambda,\lambda'} \colon X\to \bb C ,$ the 
 admissible graded contractions 
 defined by 
 $$
 \begin{array}{l}
 \eta^\lambda_{ij}=\lambda,\quad
 \eta^\lambda_{i,i*j}=\eta^\lambda_{i,k}=\eta^\lambda_{i,i*k}=1,\vspace{3pt}\\
 \mu^\lambda_{ij}= \mu^\lambda_{i,i*j}=\lambda, \quad
 \mu^\lambda_{ik}= \mu^\lambda_{i,i*k}= \mu^\lambda_{i,j*k}=1,\vspace{3pt}\\
 \beta^{\lambda,\lambda'}_{ij}=\beta^{\lambda,\lambda'}_{i,i*j}=\lambda,\quad
 \beta^{\lambda,\lambda'}_{ik}=\beta^{\lambda,\lambda'}_{i,i*k}=\lambda',\quad
 \beta^{\lambda,\lambda'}_{i,j*k}=\beta^{\lambda,\lambda'}_{i,i*j*k}=1,
 \end{array}
 $$
 and 0 otherwise.
 
 Take $\ep$ an admissible graded contraction of a (good grading) $\Gamma$ and let $T=\supp{\ep}$.
 
\begin{itemize}
 \setlength\itemsep{0.5em}
\item  If $T$ is not one of the nice sets in \eqref{eq_malos}, then $\ep\sim_n\ep^T$;

\item If $T=X_{(i)}$, then there are   $\lambda, \lambda'\in\bb C^\times$ such that $\ep\sim_n\beta^{\lambda,\lambda'}$;
\item If $T=X_{(i)}\setminus\{\{i,i*j*k\}\}$, then there is   $\lambda\in\bb C^\times$ such that $\ep\sim_n\mu^{\lambda}$;
\item If $T=X_{(i)}\setminus\{\{i,j*k\},\{i,i*j*k\}\}$, then there is a unique $\lambda\in\bb C^\times$ such that $\ep\sim_n\eta^{\lambda}$.
\end{itemize}
Moreover, $\beta^{\lambda_1,\lambda_2}\sim_n\beta^{\lambda'_1,\lambda'_2}$ if and only if $\lambda_1=\pm\lambda'_1$ and $\lambda_2=\pm\lambda'_2$; and
  $\mu^{\lambda}\sim_n\mu^{\lambda'}$ if and only if $\lambda= \pm\lambda'$.

\end{theorem}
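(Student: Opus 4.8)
The statement has two parts: (1) for each nice set $T$, pinning down a normal form for any admissible graded contraction $\ep$ with $\supp{\ep}=T$, and (2) the separation statements for the $\beta$ and $\mu$ families. The plan is to prove part (1) by a direct reduction argument on the values $\ep_{ij}$, using the freedom of normalization maps $\alpha\colon G\to\bb C^\times$ to rescale them, and the cocycle-type constraints (b1)--(b2) of Proposition~\ref{ref_existeadmisible} together with the absorbing property of nice sets (Definition~\ref{def_nice}) to propagate the normalizations consistently. Since this theorem is stated as an adaptation of \cite[Theorem~4.1]{Thomas1} and the relevant combinatorics (the list of nice sets, the niceness conditions, the structure of $X$ as the edge set of the Fano plane) is \emph{identical} to that in \cite{Thomas1}, the proof carries over essentially verbatim: nothing in the argument depends on the Lie bracket of $\la$ beyond conditions (i)--(iii) of a good grading, which we are assuming throughout. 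So the first step is simply to observe this and invoke the combinatorial machinery already set up.

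\textbf{Key steps for part (1).} First, fix $T=\supp{\ep}$ and recall that $T$ must appear in the classification list of nice sets. For $T$ not among the three sets in \eqref{eq_malos}: the strategy is to exhibit an explicit $\alpha\colon G\to\bb C^\times$ making $\ep^\alpha=\ep^T$, built by choosing the values $\alpha(g_i)$ inductively along a spanning structure of the graph $(I,T)$ so that every edge in $T$ gets normalized to $1$; the key point, which uses niceness of $T$ and condition (b2), is that the constraints on distinct edges are mutually compatible, so no obstruction arises and all nonzero $\ep_{ij}$ can be simultaneously set to $1$. For $T=X_{(i)}$ (the ``star'' at vertex $i$, six edges $\{i,j\}$), the graph has cycles through $i$, so two ratios of the $\ep_{ij}$'s survive normalization; one checks via (b2) applied to the triples spanning $G$ that these are exactly the two parameters $\lambda=\ep_{ij}/\ep_{i,j*k}$-type ratios, and after rescaling we land on $\beta^{\lambda,\lambda'}$. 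The cases $T=X_{(i)}\setminus\{\{i,i*j*k\}\}$ and $T=X_{(i)}\setminus\{\{i,j*k\},\{i,i*j*k\}\}$ are handled the same way, removing edges reduces the surviving parameters to one ($\mu^\lambda$) and then the uniqueness of $\lambda$ for $\eta^\lambda$ comes from the fact that with only four edges left in the star there is no remaining normalization freedom.

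\textbf{Key steps for part (2) and the main obstacle.} For the separation statements, suppose $\beta^{\lambda_1,\lambda_2}\sim_n\beta^{\lambda_1',\lambda_2'}$, i.e.\ there is $\alpha\colon G\to\bb C^\times$ with $\beta^{\lambda_1,\lambda_2}{}^{\alpha}=\beta^{\lambda_1',\lambda_2'}$. Writing out what this means on the six edges of $X_{(i)}$ gives six scalar equations in the three unknowns $\alpha(g_i),\dots$ (only the relevant group elements enter); the edges with value $1$ force relations among the $\alpha$'s, and substituting these into the edges carrying $\lambda_1,\lambda_2$ forces $\lambda_1'=\pm\lambda_1$, $\lambda_2'=\pm\lambda_2$, the signs coming from the $\bb Z_2$-nature of $G$ (squares of the $\alpha$-ratios are forced to be $1$). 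The converse is an explicit construction of $\alpha$ realizing each sign choice. The analogous computation for $\mu^\lambda$ yields $\lambda'=\pm\lambda$. I expect the main obstacle to be purely bookkeeping: carefully tracking which group elements $g_i+g_j$ appear as arguments of $\alpha$ for each edge of the star, and verifying the compatibility of the normalization constraints in the generic case --- that is, checking that the inductive choice of $\alpha$ along $(I,T)$ never runs into a contradiction forced by a cycle unless $T$ is one of the three exceptional stars. This is exactly the content already verified in \cite[Theorem~4.1]{Thomas1}, so the honest proof is to cite that verification and note that every ingredient it uses (niceness, (b1), (b2), the Fano combinatorics) holds in the present good-grading setting.
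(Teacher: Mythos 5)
Your proposal is correct and follows essentially the same route as the paper: the paper gives no independent proof, but simply notes that the statement is purely combinatorial (admissible graded contractions are maps $X\to\bb C$ subject to (b1)--(b2), and $\sim_n$ involves only $\alpha\colon G\to\bb C^\times$), so \cite[Theorem~4.1]{Thomas1} applies verbatim to any good grading --- which is exactly your central observation. Your supplementary sketch of the normalization bookkeeping is slightly loose in places (e.g.\ the surviving parameters for $X_{(i)}$ come from the line products $\ep_{ij}\ep_{i,i*j}$ all rescaling by the common factor $\alpha(g_i)^2$, not from graph cycles), but this does not affect the argument, whose substance is the citation you make.
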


As our purpose is to classify graded contractions up to equivalence (equivalently, admissible graded contractions up to equivalence), a  key first
step is to 
classify admissible graded contractions up to strong equivalence. (After that, the  Weyl group will help us to relate the two classifications.)
Similar to the $\f{g}_2$-case, strong equivalence and equivalence by normalization will also coincide
 for admissible graded contractions of both $\Gamma_{\f{d}_4}$ and $\Gamma_{\f{b}_3}$, but adapting that proof is not straightforward. 
Observe that  \cite[Conjecture 2.15]{ref33} wonders whether it is always true that strong equivalence and equivalence by normalization  coincide; a counterexample is exhibited in \cite[Example 2.12]{Thomas1}.

 \begin{theorem}\label{teo_fuerte}
 Let $\ep,\ep'$ be two admissible graded contractions of  either $\Gamma_{\f{d}_4}$ or $\Gamma_{\f{b}_3}$. Then  $\ep\approx\ep'$ if and only if 
  $\ep\sim_n\ep'$. 
 \end{theorem}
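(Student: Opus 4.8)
The plan is to prove the nontrivial implication $\ep\approx\ep'\Rightarrow\ep\sim_n\ep'$; the converse was already noted. Since strong equivalence preserves the support (Lemma~\ref{le_soportesiguales}), we have $T:=\supp\ep=\supp{\ep'}$, and so it suffices to treat each nice set $T$ separately. By Theorem~\ref{th_normaliz}, apart from the three exceptional families in \eqref{eq_malos} we may replace $\ep$ and $\ep'$ by $\ep^T$, which reduces everything to a triviality; and for the $\mu^\lambda$ and $\beta^{\lambda,\lambda'}$ families the same theorem reduces the claim to: \emph{a graded-isomorphism $f\colon\la^{\mu^\lambda}\to\la^{\mu^{\lambda'}}$ (resp.\ for $\beta$) forces $\lambda=\pm\lambda'$ (resp.\ $\lambda_i=\pm\lambda'_i$)}, since $\sim_n$ on these families is already understood by the last sentence of Theorem~\ref{th_normaliz}. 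So the whole theorem comes down to extracting numerical invariants of the contracted algebra that detect the parameters $\lambda,\lambda'$ up to sign.

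First I would handle the generic case $T\notin\eqref{eq_malos}$: here $\ep\sim_n\ep^T\sim_n\ep'$ by Theorem~\ref{th_normaliz} directly, with nothing to prove. Next, for $T=X_{(i)}$ we are comparing $\la^{\beta^{\lambda_1,\lambda_2}}$ with $\la^{\beta^{\lambda'_1,\lambda'_2}}$. In these algebras all brackets vanish except those of the form $[\la_{g_i},\la_{g_j}]$ with $j\ne i$ (by the definition of the support $X_{(i)}$ and condition (ii)); equivalently $\la^\ep$ has $\la_{g_i}$ acting on the abelian ideal $\bigoplus_{j\ne i}\la_{g_j}$, the action of a fixed $z\in\la_{g_i}$ being $\operatorname{ad}z$ rescaled blockwise by the $\beta$-values. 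The key is then to recover $\lambda_1,\lambda_2$ (up to sign) from the eigenvalue data of $\operatorname{ad}^2 z$ restricted to an appropriate component. This is exactly where Lemma~\ref{le_corchete}(ii) enters: for $z\in(\mathfrak d_4)_{g_i}$ the spectrum of $\operatorname{ad}^2z|_{(\mathfrak d_4)_{g_j}}$ is $\{-(a\pm d)^2,-(b\pm c)^2\}$, and after inserting the $\beta$-scalings these eigenvalues get multiplied by the relevant $\beta_{jk}$'s. Because a graded-isomorphism $f$ carries $\la_{g_i}$ to itself (strong equivalence) and intertwines the contracted brackets, it conjugates these $\operatorname{ad}^2$-operators up to the known scalars; comparing spectra over a generic $z$ (using that $\mathbb C$ is algebraically closed so that, e.g., ratios of eigenvalues are genuine invariants) pins down $\lambda_1^2,\lambda_2^2$, hence $\lambda_i=\pm\lambda'_i$. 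The $\mu^\lambda$ case is the same argument with one fewer parameter, and $\eta^\lambda$ is even easier since Theorem~\ref{th_normaliz} already gives a \emph{unique} $\lambda$ and one just checks a graded-isomorphism cannot change it, again via the $\operatorname{ad}^2$-spectrum (or simply via the Jordan structure of $\operatorname{ad}z$).

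The delicate point — and the one I expect to be the main obstacle — is making the "compare spectra of $\operatorname{ad}^2 z$" step genuinely \emph{graded-natural}: a priori $f$ need only satisfy $f(\la_g)=\la_g$ and be a Lie isomorphism of the \emph{contracted} algebras, so one must check that $f|_{\la_{g_i}}\colon\la_{g_i}\to\la_{g_i}$ and $f|_{\la_{g_j}}\colon\la_{g_j}\to\la_{g_j}$ together conjugate $\operatorname{ad}_{\ep}^2 z$ to $\operatorname{ad}_{\ep'}^2 f(z)$, taking proper account of the fact that the two brackets carry different structure constants on the two sides. Concretely, $\operatorname{ad}_{\ep}z|_{\la_{g_j}}=\sum$(block scalings)$\cdot\operatorname{ad}z|_{\la_{g_j}}$, and one needs the block scalings on the $\ep$- and $\ep'$-sides to match up after conjugation; this is where the explicit bases $\mathcal B_{ijk}$, $\mathcal B'_{ijk}$ from Lemma~\ref{le_corchete}(i) and the matrices displayed there become indispensable, and probably where the linear maps $\varphi_{ij}$ from Remark~\ref{re_fiij} (and the "linear extensions which are not automorphisms" promised there) get used to build convenient normal forms for $f$. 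The rest — translating "$f$ exists" into "the parameters agree up to sign" and then invoking Theorem~\ref{th_normaliz} to conclude $\ep\sim_n\ep'$ — is bookkeeping once this naturality is in place, and should be carried out for $\mathfrak d_4$ and $\mathfrak b_3$ uniformly, the $\mathfrak b_3$ computations being obtained from the $\mathfrak d_4$ ones by setting $d=0$ exactly as in the proof of Lemma~\ref{le_corchete}.
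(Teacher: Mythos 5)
Your proposal is correct and follows essentially the same route as the paper: reduce via Lemma~\ref{le_soportesiguales} and Theorem~\ref{th_normaliz} to the three exceptional supports in \eqref{eq_malos}, then pin down the parameters up to sign by comparing spectra of squared adjoint maps computed from Lemma~\ref{le_corchete}(ii). The only caveat is that the ``delicate naturality'' step you single out is immediate rather than the main obstacle — applying the graded isomorphism $\psi$ to the homogeneous double bracket $[z,[z,w]^{\ep}]^{\ep}$ with $z\in\la_{g_i}$, $w\in\la_{g_j}$ gives at once $\ad^2\psi(z)\vert_{\la_{g_j}}=\frac{\ep_{ij}\ep_{i\,i*j}}{\ep'_{ij}\ep'_{i\,i*j}}\,\psi\circ\ad^2 z\circ\psi^{-1}\vert_{\la_{g_j}}$ (a single scalar per side, no normal form for $\psi$ and no use of the maps $\varphi_{ij}$ of Remark~\ref{re_fiij}) — while the real content of the paper's proof is the case-by-case matching of the resulting spectral sets (taking $z=\varphi_{e_j,e_{i*j}}$ and equating, e.g., $\{-a^2,-(b\pm c)^2\}$ with the rescaled spectra) that you defer as bookkeeping.
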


\begin{proof} 
 We begin by considering $\psi\colon \la^{\ep}\to \la^{\ep'}$ a graded-isomorphism between admissible graded contractions of a good grading on $\la$, and 
a pair $i,j$ with $\{i,j\},\{i,i*j\}$ belonging to the support $ T=\supp\ep$ (necessarily equal to $\supp{\ep'}$, by Lemma~\ref{le_soportesiguales}).
 In such a case, for   $z \in \la_{g_i}\equiv \la_i$ and  $w\in \la_j$ we have 
$\psi([z,[z,w]^{\ep } ]^{\ep } )=[\psi(z),[\psi(z),\psi(w)]^{\ep' } ]^{\ep' } $,
so that
$\ep_{ij}\ep_{i\,i\ast j}\psi([z,[z,w]])=\ep'_{ij}\ep'_{i\,i\ast j}[\psi(z),[\psi(z),\psi(w)]]$. 
 This implies 
$\ad^2\psi(z)\vert_{\la_j}= \frac{\ep_{ij}\ep_{i\,i\ast j} }{\ep'_{ij}\ep'_{i\,i\ast j}}  \psi \circ\ad^2z\circ  \psi^{-1}\vert_{\la_j},$
and in particular,  for any $z\in\la_i$,
\begin{equation}\label{eq_relacionespectros}
\Spec(\ad^2\psi(z)\vert_{\la_j})= \frac{\ep_{ij}\ep_{i\,i\ast j} }{\ep'_{ij}\ep'_{i\,i\ast j}} \Spec(\ad^2z\vert_{\la_j}).
\end{equation}

Now, take $\la=\f{b}_3$.
By reductio  ad absurdum, assume the existence of two admissible graded contractions $\ep$ and $\ep'$  which are strongly equivalent but not equivalent by normalization. Denote by  $T=\supp\ep=\supp{\ep'}$.
By  Theorem~\ref{th_normaliz}, $T$ has to be one of the nice sets in \eqref{eq_malos} (for any  other nice set $T$, there is a unique class 
up to normalization of admissible graded contractions with support $T$). Let us look at each of the three possible supports separately.\smallskip

\boxed{T=\{\{i,j\},\{i,i*j\},\{i,k\},\{i,i*k\}\}} for some $j\ne i$, $k\notin\ell_{ij}$.\smallskip

There are $\lambda,\lambda'\in\mathbb C^\times$ such that $\ep\sim_n\eta^{\lambda}$ and  $\ep'\sim_n\eta^{\lambda'}$ by Theorem~\ref{th_normaliz}. As equivalence by normalization implies strong equivalence, in particular
$\eta^{\lambda}$ and $\eta^{\lambda'}$ are strongly equivalent.  
In other words we can assume that $\ep=\eta^{\lambda}$ and $\ep'=\eta^{\lambda'}$.
Take as above a graded isomorphism $\psi\colon \la^{\ep}\to \la^{\ep'}$.
For any $z\in  \la_i $, by \eqref{eq_relacionespectros},
\begin{equation}\label{harta}
\Spec(\ad^2\psi(z)\vert_{\la_j})=\frac{\lambda}{\lambda'}\Spec(\ad^2z\vert_{\la_j}),\qquad
\Spec(\ad^2\psi(z)\vert_{\la_k})= \Spec(\ad^2z\vert_{\la_k}).
\end{equation}
Take, for instance, $z=\varphi_{e_j,e_{i*j}}\in  \la_i $. Lemma~\ref{le_corchete} (ii) says that 
$\Spec(\ad^2z\vert_{\la_j})=\{-1,0,0\}$  and that
$\Spec(\ad^2z\vert_{\la_k})=\{0,-1,-1\}$. 
(This notation takes into account the  multiplicity, but not the order.)
By \eqref{harta},
$\Spec(\ad^2\psi(z)\vert_{\la_j})=\{-\frac{\lambda}{\lambda'},0,0\}$  and
$\Spec(\ad^2\psi(z)\vert_{\la_k})=\{0,-1,-1\}$.
As there are $a,b,c\in\bb C$ such that 
$\psi(z)=a\varphi_{e_{j},e_{i*j}}+b\varphi_{e_{k},e_{i*k}}+c\varphi_{e_{j*k},e_{i*j*k}}$, then, by  again applying
Lemma~\ref{le_corchete} (ii), we get
 $\{-a^2,-(b\pm c)^2\}=\{-\frac{\lambda}{\lambda'},0,0\}$  and
$\{-b^2,-(a\pm c)^2  \}=\{0,-1,-1\}$. As the first two sets are equal, one of the following possibilities  must occur:
\begin{itemize}
\item $a^2=\frac{\lambda}{\lambda'}$, $b+c=b-c=0$. Then $b=c=0$ and   $\{0,-1,-1\}=\{-b^2,-(a\pm c)^2  \}=\{0,-a^2,-a^2\}$. So $\frac{\lambda}{\lambda'}=a^2=1$ and $\lambda=\lambda'$, getting a contradiction (this gives the same class up to normalization).
\item $a=b+c=0$ and $(b-c)^2=\frac{\lambda}{\lambda'}$. Then $\{0,-1,-1\}=\{-b^2,-(0\pm c)^2  \}=\{-b^2,-b^2,-b^2\}$, which is evidently a contradiction.
\item $a=b-c=0$ and $(b+c)^2=\frac{\lambda}{\lambda'}$. Here the contradiction is achieved just as  in the bullet above.
\end{itemize}

\boxed{T=\{\{i,j\},\{i,i*j\},\{i,k\},\{i,i*k\}, \{i,j*k\}\}} for some $j\ne i$, $k\notin\ell_{ij}$.\smallskip

In this case we can assume that $\ep=\mu^{\lambda}$ and $\ep'=\mu^{\lambda'}$ for $\lambda,\lambda'\in\mathbb C^\times$.
Thus $ \frac{\ep_{ij}\ep_{i\,i\ast j} }{\ep'_{ij}\ep'_{i\,i\ast j}}=\frac{\lambda^2}{(\lambda')^2}$
while $ \frac{\ep_{ik}\ep_{i\,i\ast k} }{\ep'_{ik}\ep'_{i\,i\ast k}}=1$.
By applying Lemma~\ref{le_corchete} (ii) to $z=\varphi_{e_j,e_{i*j}}$, we get as above
$\Spec(\ad^2z \vert_{\la_j})=\{-1,0,0\}$ and 
$\Spec(\ad^2z \vert_{\la_k})=\{0,-1,-1\}$. This time  Eq.~\eqref{eq_relacionespectros} gives 
$\Spec(\ad^2\psi(z )\vert_{\la_j})=\{-\left(\frac{\lambda}{\lambda'}\right)^2,0,0\}$  and
$\Spec(\ad^2\psi(z )\vert_{\la_k})=\{0,-1,-1\}$.
The same analysis as above forces $a^2=\left(\frac{\lambda}{\lambda'}\right)^2$, $b=c=0$,
for $a,b,c\in\bb C$ the coefficients of $\psi(z)$ with respect to the basis $\mathcal B'_{ijk}$. Thus $\left(\frac{\lambda}{\lambda'}\right)^2=1$,
and $\lambda=\pm\lambda'$, so that $\mu^{\lambda}\sim_n\mu^{\lambda'}$, a contradiction.\smallskip

\boxed{T=X_{(i)}}  
\smallskip

Assume now that $\ep=\beta^{\lambda_1,\lambda_2}$ and $\ep'=\beta^{\lambda'_1,\lambda'_2}$ are  strongly equivalent admissible  graded contractions,
for the maps $\beta$'s defined in Theorem~\ref{th_normaliz}, $\lambda_i,\lambda'_i\in\mathbb C^\times$  for both of $i=1,2$.
For $z=\varphi_{e_j,e_{i*j}}\in\la_i$,  if $\psi(z)=  a\varphi_{e_{j},e_{i*j}}+b\varphi_{e_{k},e_{i*k}}+c\varphi_{e_{j*k},e_{i*j*k}}$, then Eq.~\eqref{eq_relacionespectros} tells
\begin{itemize}
\item[(i)] $
\{-a,-(b\pm c)^2\}=\Spec(\ad^2\psi(z)\vert_{\la_j})= \frac{\lambda^2_1}{(\lambda'_1)^2}  \Spec(\ad^2z\vert_{\la_j})=\{
\frac{-\lambda^2_1}{(\lambda'_1)^2},0,0\}$;
\item[(ii)] $
\{-b,-(a\pm c)^2\}=\Spec(\ad^2\psi(z)\vert_{\la_k})= \frac{\lambda^2_2}{(\lambda'_2)^2} \Spec(\ad^2z\vert_{\la_{k}})=\{0,
\frac{-\lambda^2_2}{(\lambda'_2)^2},\frac{-\lambda^2_2}{(\lambda'_2)^2}\}$;
\item[(iii)] $
\{-c,-(a\pm b)^2\}=\Spec(\ad^2\psi(z)\vert_{\la_{j*k}})=   \Spec(\ad^2z\vert_{\la_{j*k}})=\{0,-1,-1\}$.
\end{itemize}
If $a=0$ then either $b+ c=0$ or $b-c=0$, and the sets in (ii) and (iii) would have a unique value with multiplicity 3, contradiction. This leads to $b=c=0$, so that 
$a^2=\frac{\lambda^2_1}{(\lambda'_1)^2} $ (by (i)). By (ii), $a^2=\frac{\lambda^2_2}{(\lambda'_2)^2} $, and by (iii), $a^2=1$.
We have proved   $\left(\frac{\lambda_1}{\lambda'_1}\right)^2=1=\left(\frac{\lambda_2}{\lambda'_2}\right)^2$, so $\ep$ and $\ep'$ are equivalent by normalization, again due to Theorem~\ref{th_normaliz}. This finishes the proof for $\la=\f{b}_3$. \smallskip

Now, take $\la=\f{d}_4$ and again we are going to prove that there can not exist 
  two admissible graded contractions $\ep$ and $\ep'$  which are strongly equivalent but not equivalent by normalization. If there were, the support  $T=\supp\ep=\supp{\ep'}$ would be one of the nice sets in \eqref{eq_malos}.
Let us check the case $T=\{\{i,j\},\{i,i*j\},\{i,k\},\{i,i*k\}\}$ for some $j\ne i$, $k\notin\ell_{ij}$, since the two other possible supports work similarly. 
There are $\lambda,\lambda'\in\mathbb C^\times$ such that $\ep\sim_n\eta^{\lambda}$ and  $\ep'\sim_n\eta^{\lambda'}$ by Theorem~\ref{th_normaliz},
so  $\eta^{\lambda}$ and $\eta^{\lambda'}$ are strongly equivalent and there exists    a graded isomorphism
$\psi\colon \la^{\eta^{\lambda}}\to \la^{\eta^{\lambda'}}$.  Call $\psi(z)= d \varphi_{1,e_{i}}+ a\varphi_{e_{j},e_{i*j}}+b\varphi_{e_{k},e_{i*k}}+c\varphi_{e_{j*k},e_{i*j*k}}$ the image of $z=\varphi_{e_j,e_{i*j}}\in\la_i$. According to Eq.~\eqref{eq_relacionespectros} and to Lemma~\ref{le_corchete},
\begin{itemize}
\item[(i)] $
\{-(a\pm d)^2,-(b\pm c)^2\}=\Spec(\ad^2\psi(z)\vert_{\la_j})= \frac{\lambda}{\lambda'}  \Spec(\ad^2z\vert_{\la_j})=\{
\frac{-\lambda}{\lambda'},\frac{-\lambda}{\lambda'},0,0\}$;
\item[(ii)] $
\{-(b\pm d)^2,-(a\pm c)^2\}=\Spec(\ad^2\psi(z)\vert_{\la_k})=  \Spec(\ad^2z\vert_{\la_{k}})=\{0,
0,-1,-1\}$.
\end{itemize}
Looking at (i), we have three possible situations. First, if $b=c=0$, then either $a+d=a-d$ or $a+d=d-a$.
 In the first case, $d=0$; and in the second one, $a=0$. 
  In the first case, $a^2=\frac{\lambda}{\lambda'} $, but (ii) gives
$\{-a^2,-a^2,0,0\}=\{0,0,-1,-1\}$, so $\frac{\lambda}{\lambda'} =a^2=1$ and ${\lambda}={\lambda'} $. In the second case, 
$d^2=\frac{\lambda}{\lambda'} $, but (ii) gives
$\{ -d^2,-d^2,0,0\}=\{0,0,-1,-1\}$, getting again ${\lambda}={\lambda'} $. The second situation is $a=d=0$ and either $b+c=b-c$ or $b+c=c-b$. This   can be discussed in a completely similar way to the first situation. The third possibility is that there are $s,t\in\{\pm1\}$ with $a+sd=0=b+tc$
and $(a-sd)^2=(b-tc)^2=\frac{\lambda}{\lambda'} $. Thus $(2a)^2=(2b)^2=\frac{\lambda}{\lambda'} $ and $a=\pm b$, so that any pair of scalars in $\{a,b,c,d\}$ are either equal or opposite. This gives $\{b\pm d\}=\{2b,0\}$ (not ordered sets) and $\{a\pm c\}=\{2a,0\}$, so that (ii) yields $\frac{\lambda}{\lambda'} =1$. 
The rest of the proof consists of analyzing the other possible supports, using the same arguments.
\end{proof} 

\begin{cor}
The Lie algebras obtained from $\la\in\{\f{d}_4,\f{b}_3\}$ by graded contraction of $\Gamma_\la$ are graded-isomorphic to either
  \begin{itemize}
  \item $\la^{\ep^T}$ for $T$ a nice set,
  
  or
  \item $\la^{\eta^\lambda},\la^{\mu^\lambda}$ and $\la^{\beta^{\lambda,\lambda'} }$ for $\lambda,\lambda'\in\bb C^\times$ and
   the graded contractions $\eta^\lambda,\mu^\lambda,\beta^{\lambda,\lambda'} $ described in Theorem~\ref{th_normaliz}.
  
  \end{itemize}

\end{cor}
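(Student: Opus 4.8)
The plan is to combine the earlier structural results into a short bookkeeping argument. The point to establish is that every graded contraction of $\Gamma_\la$ (for $\la\in\{\f{d}_4,\f{b}_3\}$) is graded-isomorphic to one from the explicit list. First I would recall that classifying graded contractions up to equivalence is the same as classifying \emph{admissible} graded contractions up to equivalence (by the Lemma following the definition of admissible map), and that strong equivalence is even finer; so it suffices to show that every admissible graded contraction $\ep$ of $\Gamma_\la$ is strongly equivalent to one of $\ep^T$ ($T$ a nice set), $\eta^\lambda$, $\mu^\lambda$, or $\beta^{\lambda,\lambda'}$. The bridge is Theorem~\ref{teo_fuerte}: for these two gradings, $\ep\approx\ep'$ if and only if $\ep\sim_n\ep'$, so it is enough to produce, for each admissible $\ep$, a representative of its equivalence-by-normalization class inside the stated list.

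Next I would invoke Theorem~\ref{th_normaliz} directly. Given an admissible graded contraction $\ep$, let $T=\supp\ep$; by the Theorem referenced there (the list of all nice sets), $T$ is a nice set, and moreover Theorem~\ref{th_normaliz} sorts the possibilities into exactly four cases. If $T$ is not one of the three exceptional families in \eqref{eq_malos}, then $\ep\sim_n\ep^T$, hence $\ep\approx\ep^T$ by Theorem~\ref{teo_fuerte}, and $\la^\ep\cong\la^{\ep^T}$ as graded algebras. If $T=X_{(i)}$, then $\ep\sim_n\beta^{\lambda,\lambda'}$ for some $\lambda,\lambda'\in\bb C^\times$, so $\la^\ep\cong\la^{\beta^{\lambda,\lambda'}}$. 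The remaining two exceptional families give $\ep\sim_n\mu^\lambda$ and $\ep\sim_n\eta^\lambda$ respectively, again via Theorem~\ref{th_normaliz} and Theorem~\ref{teo_fuerte}. Since a graded contraction and an admissible one equivalent to it yield isomorphic (indeed graded-isomorphic, after the preliminary reduction) algebras, this exhausts all cases and proves the corollary.

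I do not expect a genuine obstacle here: all the heavy lifting — the classification of nice sets, the normalization Theorem~\ref{th_normaliz}, and the coincidence of strong equivalence with equivalence by normalization in Theorem~\ref{teo_fuerte} — has already been done. The only mild care needed is to note that the three exceptional nice-set families in \eqref{eq_malos}, $X_{(i)}$, $X_{(i)}\setminus\{\{i,j\}\}$, $X_{(i)}\setminus\{\{i,j\},\{i,i\ast j\}\}$, correspond (after relabelling by a collineation, i.e.\ up to the action of $\weyl(\Gamma_\la)\cong\Aut(\bb Z_2^3)$) to the supports $X_{(i)}$, $X_{(i)}\setminus\{\{i,i\ast j\ast k\}\}$, and $X_{(i)}\setminus\{\{i,j\ast k\},\{i,i\ast j\ast k\}\}$ appearing in the statement of Theorem~\ref{th_normaliz}; this matching is purely combinatorial. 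With that observation in place the proof is a two-line appeal to Theorems~\ref{th_normaliz} and~\ref{teo_fuerte}.
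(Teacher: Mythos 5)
Your argument is correct and is essentially the paper's (the corollary is stated there without proof precisely because it follows from the reduction to admissible graded contractions together with Theorem~\ref{th_normaliz} and the fact that $\sim_n$ implies $\approx$). One small remark: you do not actually need Theorem~\ref{teo_fuerte} at all --- only the trivial direction ($\ep\sim_n\ep'\Rightarrow\ep\approx\ep'$), which is already noted in the definition of equivalence by normalization; the hard converse proved in Theorem~\ref{teo_fuerte} is only needed later, for the classification up to (strong) equivalence.
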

 \noindent  (Note that these graded contractions $\eta^\lambda,\mu^\lambda,\beta^{\lambda,\lambda'} $ do not depend exclusively on 
$\lambda, \lambda'\in\bb C^\times$, but on the choice of the indices $i,j,k$, but making the indices explicit would have complicated the notation too much.)

Most remarkably,  
in this corollary  we have not been worried about excluding possibly repeated cases, with the aim of providing a less specific summary.  
 
Now we are prepared to deal with 
the most important question for us, namely,  how many not isomorphic Lie algebras can be obtained by graded contractions of our grading $\Gamma_\la$ (and their properties).  That is, we want to compute the classes of graded contractions by $\sim$; or equivalently the classes of admissible graded contractions by $\sim$.
At this stage,  the Weyl group plays an important role. Recall that the Weyl group   $\mathcal W(\Gamma_\la)=\Aut(\bb Z_2^3)\cong\mathrm{GL}(3,2)$ coincides with the group of collineations of the Fano plane:

\begin{define}
A map $\sigma\colon I\to I$ is called a \emph{collineation}  if  $\sigma(i*j)=\sigma(i)*\sigma(j)$ for all $i\ne j$. 
That is, $\sigma$ sends lines of the Fano plane to lines. 
 \end{define}

The collineation group also acts on $X$ and on the subsets of $X$: 
 \begin{define}
 Two subsets $T,T'$ of $X$ are called \emph{collinear} if there is a collineation $\sigma$ such that $\sigma(T)=T'$. In such case, we will write $T\sim_cT'$.
\end{define}

Note that if $T$ is nice, $\sigma(T)$ also is, so that $\sim_c$ is an equivalence relation in the set of nice sets too.
 According to \cite[Theorem 3.27]{Thomas1},
the  779 nice sets   are grouped in just 24 equivalence classes.
As we mentioned earlier,  the Weyl group begins to play a role, since  it permits us to prove that $\ep^T\sim \ep^{\sigma(T)}$.  
This result is slightly stronger.

\begin{lemma}
If $\Gamma$ is a very good grading, 
the group of collineations acts on the set of admissible graded contractions of $\Gamma$ without changing the equivalence class.
More precisely, if $\ep$ is an admissible graded contraction  then $\sigma\cdot\ep$ is also, for $(\sigma\cdot\ep)_{ij}:=\ep_{\sigma(i)\sigma(j)}$,
and $\sigma\cdot\ep\sim\ep$.
\end{lemma}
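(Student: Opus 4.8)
The plan is to exhibit, for each collineation $\sigma$, a concrete Lie algebra isomorphism $\la^\ep\to\la^{\sigma\cdot\ep}$ that permutes the homogeneous components according to $\sigma$. First I would recall that since $\Gamma$ is a \emph{very good} grading, $\weyl(\Gamma)\cong\Aut(\bb Z_2^3)$, and the identification of $\Aut(\bb Z_2^3)$ with the collineation group of the Fano plane given in \S\ref{se_propiedadesb3d4} (via $\alpha(g_i)=g_{\sigma(i)}$, $\sigma(i*j)=\sigma(i)*\sigma(j)$) shows that every collineation $\sigma$ is realized by some $f\in\Aut(\Gamma)$ with $f(\la_{g_i})=\la_{g_{\sigma(i)}}$. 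Fix such an $f$ for the given $\sigma$.

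Next I would use $f$ to transport the contracted bracket. Since $f$ is a graded automorphism of the \emph{original} $\la$, for $x\in\la_{g_i}$, $y\in\la_{g_j}$ one has $f([x,y])=[f(x),f(y)]$, and because $f(x)\in\la_{g_{\sigma(i)}}$, $f(y)\in\la_{g_{\sigma(j)}}$, the definition of $\sigma\cdot\ep$ gives
\[
f\big([x,y]^{\sigma\cdot\ep}\big)=(\sigma\cdot\ep)_{ij}\,f([x,y])=\ep_{\sigma(i)\sigma(j)}[f(x),f(y)]=[f(x),f(y)]^{\ep}.
\]
Reading this the other way around, $f$ is an isomorphism of Lie algebras $\la^{\sigma\cdot\ep}\to\la^{\ep}$ (equivalently $f^{-1}\colon\la^\ep\to\la^{\sigma\cdot\ep}$), and it satisfies $f(\la_g)=\la_{\alpha(g)}$ for all $g\in G$; hence it is an isomorphism of graded algebras in the sense required by the relation $\sim$. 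Therefore $\ep\sim\sigma\cdot\ep$. I should also note in passing that $\sigma\cdot\ep$ is again admissible: the conditions $(\sigma\cdot\ep)(g,g)=(\sigma\cdot\ep)(e,g)=(\sigma\cdot\ep)(g,e)=0$ follow immediately from admissibility of $\ep$ since $\sigma$ fixes $g_0$ and is a bijection on indices, and one checks $(\sigma\cdot\ep)(g,h)=(\sigma\cdot\ep)(h,g)$ and the cocycle-type condition (b2) of Proposition~\ref{ref_existeadmisible} are inherited because $\sigma$ respects the line structure $\ell_{ij}$ and the operation $i*j$ (so $\langle g_i,g_j,g_k\rangle=G$ iff $\langle g_{\sigma(i)},g_{\sigma(j)},g_{\sigma(k)}\rangle=G$); thus $\sigma\cdot\ep$ is genuinely a graded contraction and the statement is well-posed.

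The only genuinely substantive point — and what I expect to be the main obstacle to state cleanly rather than to prove — is the first step: that every collineation really is induced by an element of $\Aut(\Gamma)$. For $\Gamma_{\f{g}_2},\Gamma_{\f{b}_3},\Gamma_{\f{d}_4}$ this was established in \S\ref{se_propiedadesb3d4} through the octonionic automorphisms $f_\alpha\colon\mathcal O\to\mathcal O$ and their extensions $\tilde f_\alpha\colon\varphi\mapsto f_\alpha\varphi f_\alpha^{-1}$; for a general very good grading it is precisely the hypothesis $\weyl(\Gamma)\cong\Aut(\bb Z_2^3)$ that guarantees a lift, since the action of $\Aut(\bb Z_2^3)$ on $G$ translates into the action on indices described above. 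So the proof is essentially a bookkeeping argument once the surjectivity $\Aut(\Gamma)\twoheadrightarrow\weyl(\Gamma)\cong\Aut(\bb Z_2^3)$ is invoked; no computation with brackets of specific elements is needed beyond the displayed identity.
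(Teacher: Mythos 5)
Your proposal is correct and follows essentially the same route as the paper: lift the collineation to a graded automorphism of $\la$ via the hypothesis $\weyl(\Gamma)\cong\Aut(\bb Z_2^3)$, and check that this same map is an isomorphism $\la^{\sigma\cdot\ep}\to\la^{\ep}$ permuting the components, hence $\sigma\cdot\ep\sim\ep$. The only difference is that you write out explicitly the bracket computation and the verification that $\sigma\cdot\ep$ is again an admissible graded contraction, steps the paper leaves as ``trivial to check.''
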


\begin{proof}
The map $G\to G$, $g_i\mapsto g_{\sigma(i)}$, $g_0\mapsto g_0$, is a group automorphism. 
As the Weyl group of the grading coincides with ${\rm Aut}(G)$, there is a Lie algebra automorphism $\varphi\colon\la\to\la$ such that $\varphi(\la_{g_i})=\la_{g_{\sigma(i)}}$. Now it is trivial to check that
the same map is a Lie algebra isomorphism when considered as $\varphi\colon\la^{\sigma\cdot\ep}\to\la^\ep$. 
\end{proof}

\begin{cor}\label{cor_pasandoporcol}
If $\Gamma$ is a very good grading, and $T$ and $T'$ are collinear nice sets, then $\ep^T\sim\ep^{T'}$.
\end{cor}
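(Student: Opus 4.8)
The plan is to derive this directly from the preceding lemma, the only point requiring attention being the bookkeeping between a collineation and its inverse. First I would record the elementary identity $\sigma\cdot\ep^T=\ep^{\sigma^{-1}(T)}$, valid for every collineation $\sigma$ and every nice set $T$: by definition $(\sigma\cdot\ep^T)_{ij}=\ep^T_{\sigma(i)\sigma(j)}$, which equals $1$ exactly when $\{\sigma(i),\sigma(j)\}\in T$, that is, when $\{i,j\}\in\sigma^{-1}(T)$, and equals $0$ otherwise; this is precisely the defining prescription of $\ep^{\sigma^{-1}(T)}$. Note that $\sigma^{-1}$ is again a collineation and $\sigma^{-1}(T)$ is again a nice set, so both sides are legitimate admissible graded contractions and the identity makes sense.

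Now suppose $T'=\sigma(T)$ for some collineation $\sigma$. Applying the identity above to the collineation $\sigma^{-1}$ gives $\sigma^{-1}\cdot\ep^T=\ep^{(\sigma^{-1})^{-1}(T)}=\ep^{\sigma(T)}=\ep^{T'}$. On the other hand, the preceding lemma — whose hypothesis is exactly that $\Gamma$ be a very good grading — asserts $\sigma^{-1}\cdot\ep^T\sim\ep^T$. Chaining these two facts yields $\ep^{T'}\sim\ep^T$, which is the claim.

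I do not expect any genuine obstacle: the corollary is a formal consequence of the lemma together with the observation that the collineation action permutes the family $\{\ep^T\}$ in accordance with its induced action on supports. The one thing to keep straight is the direction of the permutation, i.e. not to conflate $\sigma$ with $\sigma^{-1}$ when passing between the statement "$T'=\sigma(T)$" and the statement "$\ep^{T'}=\tau\cdot\ep^T$"; once the identity $\sigma\cdot\ep^T=\ep^{\sigma^{-1}(T)}$ is in place, everything else is immediate.
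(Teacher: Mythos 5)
Your proof is correct and follows essentially the same route as the paper: the paper simply notes that for $\sigma$ with $\sigma(T)=T'$ one has $\sigma\cdot\ep^{T'}=\ep^{T}$ and then invokes the preceding lemma, while you apply the same identity with $\sigma^{-1}$ in the other direction, which is an equivalent bookkeeping choice. The care you take with $\sigma$ versus $\sigma^{-1}$ is exactly the (small) point the paper glosses over with ``check that $\sigma\cdot\ep^{T'}=\ep^{T}$.''
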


\begin{proof}
Take $\sigma$ a collineation such that $\sigma(T)=T'$ and check that $\sigma\cdot\ep^{T'}=\ep^{T}$.  Then, we can apply the above lemma.
\end{proof}

According to this easy but key result, one can describe the algebras related to the 24 representatives of the equivalence classes of the supports up to collineations. Furthermore, this description can be provided independently of how many non isomorphic contracted Lie algebras have the same support, since many of the properties of the Lie  algebra depend only on the support. A last important observation is 
 that the Lie algebras obtained by graded contractions of $\Gamma_{\f{d}_4}$ and $\Gamma_{\f{d}_3}$ have similar properties to those ones obtained by graded contractions of $\Gamma_{\f{g}_2}$. Then  the next important result follows trivially.

 \begin{theorem}\label{teo_lasalgebras}
Let $\ep\colon G\times G\to \mathbb C$ be an admissible graded contraction of $\Gamma_{\la}$, for $\la\in\{\f{b}_3,\f{d}_4\}$ and take $T=S^{\ep}$. 
Denote  by $r$ the rank of $\la$ (equal to $3$ and $4$ respectively). Then $T$ is collinear to one and only one nice set in the following list:
\smallskip   

$\bullet$ $T_1 = \emptyset$, in which case $\lae$ is abelian. 

$\bullet$ $T_{2}=\{\{1, 2\}\}$, in which case   $\lae$ is 2-step nilpotent, $\dim(\lae)' = r$ and $\dim\f{z}(\la^\ep) = 5r$.

$\bullet$ $T_{3}=\{\{1, 2\}, \{1, 3\}\}$, in which case   $\lae$ is 2-step nilpotent, $\dim(\lae)' = 2r$ and $\dim\f{z}(\la^\ep) = 4r$.

$\bullet$  $T_{4}=\{\{1, 2\}, \{1, 5\}\}$, in which case  $\lae$ is 2-step solvable but not nilpotent, $\dim(\lae)' = 2r$ and  $\dim\f{z}(\la^\ep) = 4r$.

$\bullet$ $T_{5}=\{\{1, 2\}, \{6, 7\}\}$, in which case  $\lae$ is 2-step nilpotent, $\dim(\lae)'= r$ and $\dim\f{z}(\la^\ep) = 3r$.

$\bullet$  $T_{6}=X_{\ell_{12}} =  \{\{1, 2\}, \{1, 5\}, \{2, 5\}\}$, 
in which case  $\lae$ is reductive, neither nilpotent nor solvable, $\lae = \f{z}(\lae) \oplus (\lae)'$
with $\dim\f{z}(\la^\ep) = 4r$. 

$\bullet$ $T_{7}=X^{(1)} = \{\{2, 5\}, \{3, 6\}, \{4, 7\}\}$, 
in which case   $\lae$ is 2-step nilpotent with $\f{z}(\la^\ep) = (\lae)'$ of dimension $r$.

$\bullet$  $T_{8}=\{\{1, 2\}, \{1, 3\}, \{1, 4\}\}$, in which case  $\lae$ is 2-step nilpotent and $\f{z}(\la^\ep) = (\lae)'$  has dimension
$ 3r$.

$\bullet$  $T_{9}=\{\{1, 2\}, \{1, 3\}, \{1, 5\}\}$, in which case   $\lae$ is 2-step solvable  not nilpotent, and $\dim(\lae)' =  \dim\f{z}(\la^\ep) = 3r$.

$\bullet$ $T_{10}=\{\{1, 2\}, \{1, 3\}, \{1, 7\}\}$, in which case    $\lae$ is 2-step nilpotent and $\dim (\la^\ep)' = \dim\f{z}(\la^\ep) = 3r$. 

$\bullet$ $T_{11}=\{\{1, 2\}, \{1, 6\}, \{2, 6\}\}$, in which case   $\lae$ is 2-step nilpotent, $\dim(\lae)' = 3r$  and $\dim\f{z}(\la^\ep) = 4r$.

$\bullet$  $T_{12}=\{\{1, 2\}, \{1, 6\}, \{6, 7\}\}$, in which case   $\lae$ is 2-step nilpotent, $\dim(\lae)' = 2r$ and $\dim\f{z}(\la^\ep) = 3r$.

$\bullet$ 
$T_{13}=\{\{1, 2\}, \{1, 3\}, \{1, 4\}, \{1, 5\}\}$ or $T_{14}=\{\{1, 2\}, \{1, 3\}, \{1, 5\}, \{1, 6\}\}$. In both cases,     $\lae$ is 2-step solvable  not nilpotent, $\dim\f{z}(\la^\ep) = 2r$ and $\dim(\lae)' = 4r$.

$\bullet$  $T_{15}=\{\{1, 2\}, \{1, 6\}, \{1, 7\}, \{2, 6\} \}$, in which case    $\lae$ is 2-step nilpotent  and $\dim (\lae)' = \dim\f{z}(\la^\ep) = 3r$.
 
$\bullet$ $T_{16}= \{\{1, 2\}, \{1, 6\}, \{2, 7\}, \{6, 7\}\}$, in which case   $\lae$ is 2-step nilpotent,  $\dim(\lae)' = 2r$ and $\dim\f{z}(\la^\ep) = 3r$.
 
$\bullet$  $T_{17}=\{\{1, 2\}, \{1, 3\}, \{1, 4\}, \{1, 5\}, \{1, 6\}\}$, in which case   $\lae$ is 2-step solvable not nilpotent,  $\dim(\lae)' = 5r$ and $\dim\f{z}(\la^\ep)=r$.

$\bullet$ $T_{18}=\{\{1, 2\}, \{1, 6\}, \{1, 7\}, \{2, 6\}, \{2, 7\}\}$ or $T_{19}=X_{\ell^C_{12}}$. 
In any of these cases,  $\lae$ is 2-step nilpotent and $\dim (\la^\ep)' = \dim\f{z}(\la^\ep) = 3r$.

$\bullet$  $T_{20}=X_{(1)} =\{\{1, 2\}, \{1, 3\}, \{1, 4\}, \{1, 5\}, \{1, 6\},\{1, 7\}\}$, 
in which case    $\lae$ is 2-step solvable not nilpotent, $\dim(\lae)' = 6r$ and 
$\f{z}(\la^\ep) = 0$.

$\bullet$  $T_{21}=P_{\{1, 2, 3\}} =\{\{1, 2\}, \{1, 3\}, \{1, 7\}, \{2,3\}, \{2, 6\},\{3,5\}\}$,
 in which case   $\lae$ is 2-step solvable, 3-step nilpotent, $\dim(\lae)' = 4r$ 
and $\dim\f{z}(\la^\ep) = r$.

$\bullet$  $T_{22}=T_{\{1,2,3\}}$, in which case   $\lae$ is 3-step solvable not nilpotent, $\dim(\lae)' = 6r$ and $\f{z}(\la^\ep) = 0$.


$\bullet$  $T_{23}=X\setminus X_{\ell^C_{12}}$, in which case  $\lae$ is neither nilpotent nor solvable nor reductive, $\dim\f{r}(\lae) = 4r$ and the 
Levi subalgebra of $\lae$ is  
direct sum of $r$ copies of $\f{sl}(2,\bb C)$.

$\bullet$  $T_{24} = X$, in which case  $\lae$ is simple.

All these algebras $\lae$ are $\bb Z_2^3$-graded, and every homogeneous element is semisimple.
\end{theorem}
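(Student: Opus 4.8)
The plan is to leverage the already-established reduction to combinatorics together with the structural results of \cite{Thomas1}. First I would note that, by the Corollary preceding the theorem, every graded contraction of $\Gamma_\la$ is graded-isomorphic (hence in particular isomorphic, hence equivalent) to one of the explicit contractions $\ep^T$ (for $T$ a nice set), $\eta^\lambda$, $\mu^\lambda$, or $\beta^{\lambda,\lambda'}$. Since the properties listed in the statement (nilpotency class, solvability class, $\dim(\la^\ep)'$, $\dim\f z(\la^\ep)$, Levi decomposition) are invariants of the isomorphism class, it suffices to compute them on these representatives; and because collinear nice sets give isomorphic algebras (Corollary~\ref{cor_pasandoporcol}, using that $\Gamma_\la$ is very good), it suffices to do this for the $24$ class representatives $T_1,\dots,T_{24}$ coming from \cite[Theorem~3.27]{Thomas1}. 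So the first step is bookkeeping: match the $24$ collineation classes of nice sets to the listed $T_1,\dots,T_{24}$ and confirm each appears exactly once.

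Next I would carry out the actual structural computations for each $T_m$. The key engine is Lemma~\ref{co_lasnuestrassongood}(ii): $[\la_{g_i},\la_{g_j}]=\la_{g_i+g_j}$, so in $\la^{\ep^T}$ one has $[\la_{g_i},\la_{g_j}]=\la_{g_i+g_j}$ when $\{i,j\}\in T$ and $=0$ otherwise. Thus the derived algebra is $(\la^{\ep^T})'=\sum_{\{i,j\}\in T}\la_{g_i*g_j}=\bigoplus_{g\in \mathcal D(T)}\la_g$ where $\mathcal D(T)=\{i*j:\{i,j\}\in T\}$, and since each $\la_g$ has dimension $r$, $\dim(\la^{\ep^T})'=r\,|\mathcal D(T)|$. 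Similarly the center is $\f z(\la^{\ep^T})=\bigoplus_{g:\,\{i,j\}\notin T\ \forall j}\la_{g_i}$ intersected with appropriate conditions, again a sum of $r$-dimensional pieces; and the lower/derived central series are computed the same way by iterating the "sum rule" on supports — a purely combinatorial recursion on $T$. For the three infinite families $\eta^\lambda,\mu^\lambda,\beta^{\lambda,\lambda'}$, which have the same supports as the corresponding $T_m$ with $m\in\{13,18,20\}$-type cases (they are $T_{13}$, $T_{18}$, $T_{20}$ after reading off supports — more precisely $\eta$ has support $X_{(i)}\setminus\{\{i,j*k\},\{i,i*j*k\}\}$, $\mu$ has $X_{(i)}\setminus\{\{i,i*j*k\}\}$, $\beta$ has $X_{(i)}$), the bracket structure on homogeneous components is the same up to nonzero rescaling, so nilpotency/solvability class and the dimensions of $(\la^\ep)'$ and $\f z(\la^\ep)$ are unchanged from the $\ep^T$ case. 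This is exactly the "many properties depend only on the support" remark made just before the theorem, and it is why the statement can list $\eta,\mu,\beta$ under the same bullets as their support nice sets.

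For the three non-solvable cases I would argue separately: $T_{24}=X$ gives $\ep^X$ with all $\ep_{ij}=1$, so $\la^{\ep^X}=\la$ is simple; for $T_{23}=X\setminus X_{\ell^C_{12}}$, Lemma~\ref{le_lineacopiassl2} identifies $\mathcal H_{12}=\la_{g_1}\oplus\la_{g_2}\oplus\la_{g_1+g_2}\cong r\f{sl}(2,\bb C)$ as a subalgebra whose bracket survives in $\la^{\ep^{T_{23}}}$, and one checks this is a Levi complement to the radical $\f r(\la^{\ep^{T_{23}}})=\bigoplus_{g\notin\{g_1,g_2,g_5\}}\la_g$ of dimension $4r$; for $T_6=X_{\ell_{12}}$ one gets $\la^{\ep^{T_6}}=\f z\oplus\mathcal H_{12}$ reductive. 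Finally, the last sentence — that every homogeneous element of $\la^\ep$ is semisimple — follows because the $\bb Z_2^3$-grading is unchanged by contraction: $(\la^\ep)_g=\la_g$ is an abelian subalgebra (a Cartan subalgebra of $\la$ by Lemma~\ref{le_corchete}), and for $x\in\la_g$, $\ad_{\la^\ep}x$ acts on each $\la_h$ as a scalar multiple of $\ad_\la x$ (by the definition of $[\cdot,\cdot]^\ep$), which is semisimple; since $\la^\ep=\bigoplus_h\la_h$, $\ad_{\la^\ep}x$ is diagonalizable, i.e.\ $x$ is ad-semisimple, hence (as the statement intends) a semisimple element.

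The main obstacle I anticipate is not conceptual but organizational: correctly computing, for each of the $24$ support classes, the full derived series and lower central series from the combinatorial sum rule — in particular getting the solvability versus nilpotency distinctions right (e.g.\ why $T_4=\{\{1,2\},\{1,5\}\}$ yields a solvable non-nilpotent algebra while $T_3=\{\{1,2\},\{1,3\}\}$ yields a nilpotent one, which comes down to whether the second derived/descending-central step stabilizes at a nonzero subspace because some $g_i*g_j$ already lies in the support of brackets again). Concretely, $\{1,5\}$ and $\{1,2\}$ together force $[\la_{g_5},\la_{g_1}]=\la_{g_4}$ hmm — rather, the point is that $\ell_{12}=\{1,2,5\}$ so $g_1+g_2=g_5$, making $[\la_{g_5},\la_{g_1}]=\la_{g_2}\subseteq(\la^\ep)'$ and then $[(\la^\ep)',(\la^\ep)']\ne0$ fails to decay, giving non-nilpotency; whereas for $T_3$ the index $3\notin\ell_{12}$ decouples things. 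Carefully verifying each such dichotomy, and matching the stated dimension counts, is the real work, but it is routine given Lemma~\ref{co_lasnuestrassongood}(ii) and the classification of nice sets. I would present it as a table-driven case analysis rather than $24$ separate paragraphs.
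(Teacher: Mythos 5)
Your overall strategy---reduce to the $24$ collineation classes via Corollary~\ref{cor_pasandoporcol}, then read off derived algebra, center and the lower central/derived series combinatorially from the support using Lemma~\ref{co_lasnuestrassongood}(ii), with $T_6$, $T_{23}$, $T_{24}$ handled through Lemma~\ref{le_lineacopiassl2}---is essentially the route the paper takes (the paper phrases it as a transfer of the corresponding $\f{g}_2$ statement of \cite{Thomas1}, the point being exactly that these invariants depend only on the support, with each homogeneous component now of dimension $r$). Two small repairs: the infinite families are attached to $T_{14}$, $T_{17}$, $T_{20}$ (your explicit supports are correct, your labels ``$T_{13}$, $T_{18}$'' are not), and your center computation tacitly uses that no nonzero element of $\la_{g_i}$ annihilates $\la_{g_j}$ when $\{i,j\}$ is in the support; this follows from the $r\,\f{sl}(2,\bb C)$ decomposition in Lemma~\ref{le_lineacopiassl2} and should be stated.

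The genuine gap is your proof of the final assertion that every homogeneous element is semisimple. The argument ``$\ad_{\la^\ep}x$ is a block-rescaling of $\ad_\la x$, hence still diagonalizable'' fails when some scalars $\ep_{gh}$ vanish: for $T_2=\{\{1,2\}\}$ and $0\ne x\in\la_{g_1}$, the operator $\ad_{\la^\ep}x$ sends $\la_{g_2}$ onto a nonzero subspace of $\la_{g_5}$ and kills everything else, so it is a nonzero operator of square zero, hence not diagonalizable. Indeed, ad-semisimplicity inside $\la^\ep$ cannot be what is meant: in a nonabelian nilpotent Lie algebra every ad-semisimple element is central, so none of the nilpotent algebras in the list could have all homogeneous elements ad-semisimple. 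The claim must be read (consistently with the discussion in Section~\ref{se_propiedadesb3d4}) as semisimplicity in the original algebra: by Lemma~\ref{le_corchete} the homogeneous components of $\Gamma_\la$ are Cartan (toral) subalgebras of $\la$, i.e.\ consist of semisimple endomorphisms of $\mathcal{O}$, and since $\la^\ep$ has the same underlying graded vector space, its homogeneous elements are semisimple elements of $\la$; this is independent of $\ep$ and requires no rescaling argument at all.
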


Last property allows us to  easily choose bases of $\la^\ep$ formed by semisimple elements, for any contracted Lie algebra. 

The above theorem does not say how many Lie algebras are attached to each of the 24 possible supports. 
For instance,  if $T$ is collinear to either $T_{14}$ or $T_{17}$,
there is a one-parametric family  (two-parametric   for $T=X_{(1)}$) of  non-graded isomorphic Lie algebras related to   admissible graded contractions with that support.  At the moment it is not clear when two algebras in this family (all of them 2-step solvable) are isomorphic. We aim to prove that the isomorphism classes of graded contractions
of $\Gamma_{\f{d}_4}$ and $\Gamma_{\f{d}_3}$ 
coincide with the isomorphism classes of graded contractions
 of $\Gamma_{\f{g}_2}$. The first step is to note that there exist two isomorphic contracted algebras related to supports 
 which are \textit{not} collinear; so the 
situation is more complicated than it seems.

\begin{example}\label{ex_excepcion}
The converse of Corollary~\ref{cor_pasandoporcol}  is not true for $\Gamma=\Gamma_\la$, $\la\in\{\mathfrak g_{2},\mathfrak b_{3},\mathfrak d_{4}  \}$.
 If $i\ne j$, $k\notin\ell_{ij},$ let us prove that although the nice sets:
\begin{equation}\label{eq_T10T8}
T=\{\{ i,j \},\{i,k  \},\{i, j*k \}\} \qquad\textrm{and}\qquad T'=\{\{ i,j \},\{i,k  \},\{i, i*j*k \}\}
\end{equation}
are clearly not collinear, the algebras related to $\ep^T$ and to $\ep^{T'}$ are,  in fact, isomorphic.
A way of providing an isomorphism, when $\la\in\{ \mathfrak b_{3},\mathfrak d_{4}  \},$
 is to use the  auxiliary maps $ \varphi_{rs}\in\mathrm{Aut}(\la_{r}\oplus\la_{s}\oplus\la_{r*s})$ considered
in Remark~\ref{re_fiij}.
It is a straightforward computation that the \emph{extended} map
$\theta\colon\la^{\ep^{T}}\to \la^{\ep^{T'}}$ defined by
$\theta\vert_{\la_{g_t}}=\varphi_{j*k,i*j*k}$ if $t\in\ell_{i,j*k}$ 
and by $\theta\vert_{\la_{g_t}}=\mathrm{id}$ otherwise,
is an isomorphism as required. 
\end{example}

\begin{remark}
It is worth mentioning that while \cite{Thomas1} was being written, we believed that the  Weyl group was of even greater relevance. More precisely, we thought that
 $\ep\sim\ep'$ would imply the existence of a collineation $\sigma$ with $\sigma\cdot\ep\approx\ep'$.
 If this had been true, Lemma~\ref{le_soportesiguales}  would have said that $S^{\ep'}=S^{\sigma\cdot\ep}$, and in particular the supports of $\ep$ and $\ep'$
 would have been collinear. But the latter is not necessarily true,
as seen in the above example.
 What we will at least be able to show for  $\Gamma_{\f{b}_3}$
 and $\Gamma_{\f{d}_4}$ is that if  $\ep\sim\ep'$ \emph{and}  the supports of $\ep$ and $\ep'$ are collinear, then 
 there is a collineation $\sigma$ with $\sigma\cdot\ep\approx\ep'$. 
\end{remark}

The above  need not be true in general because the proof uses some explicit isomorphisms between the contracted algebras.
As in \cite[Lemma~4.9]{Thomas1}, the next result is clear.

\begin{lemma}\label{lematitas}
Let  $\ep$ an admissible graded contraction of $\Gamma_\la$, $i\ne j\in I$, and $\theta_{ij}\colon\la^\ep\to\la^\ep$ the map given by
$\theta_{ij}\vert_{\la_{g_t}}=\varphi_{ij}$ (defined in Remark~\ref{re_fiij}) if $t\in\ell_{ij}$ 
and by $\theta\vert_{\la_{g_t}}=\mathrm{id}$ otherwise.
Then $\theta_{ij}\in\mathrm{Aut}(\la^\ep)$ if and only if:  
  for any $\{s,t\}\in \mathcal S^\ep$, either $\ell_{st}={\ell_{ij}}$ or $\ell_{st}\subset I\setminus\{i,j\}$; and $\ep_{it}=\ep_{jt}$ for all $t\ne i,j$.
\end{lemma}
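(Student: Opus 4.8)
The plan is to work with the explicit piecewise map $\theta_{ij}$ and test the condition that it be a Lie algebra homomorphism of $\la^\ep$ against bracket pairs $[\la_{g_s},\la_{g_t}]^\ep$, using the concrete structure provided by Lemma~\ref{le_lineacopiassl2} and Remark~\ref{re_fiij}. First I would record that $\theta_{ij}$ is automatically bijective (it is the identity on most components and $\varphi_{ij}\in\Aut(\mathcal H_{ij})$ restricted to each $\la_{g_t}$, $t\in\ell_{ij}$, is invertible), so the only issue is multiplicativity: $\theta_{ij}([x,y]^\ep)=[\theta_{ij}(x),\theta_{ij}(y)]^\ep$ for homogeneous $x\in\la_{g_s}$, $y\in\la_{g_t}$. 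Since $[x,y]^\ep=\ep_{st}[x,y]$ and $\theta_{ij}$ is linear, I would split the verification according to how the lines $\ell_{ij}$ and $\ell_{st}$ meet: either $\ell_{st}=\ell_{ij}$, or $\ell_{st}$ is disjoint from $\{i,j\}$ (so $s,t,s\ast t\notin\{i,j\}$), or $\ell_{st}$ meets $\{i,j\}$ in exactly one point and is otherwise generic.

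In the first case ($\ell_{st}=\ell_{ij}$), the condition reduces to $\varphi_{ij}$ being an automorphism of $\mathcal H_{ij}$, which holds by Remark~\ref{re_fiij}; the $\ep$-factors cancel because $\theta_{ij}$ scales nothing — it permutes the $\mathfrak{sl}_2$-basis with signs — and $[x,y]^\ep=\ep_{st}[x,y]$ on both sides with the same $\ep_{st}=\ep_{ij}$. In the second case ($\ell_{st}\subseteq I\setminus\{i,j\}$), $\theta_{ij}$ is the identity on $\la_{g_s}$, $\la_{g_t}$ and $\la_{g_{s\ast t}}$, so multiplicativity is trivial. The remaining, genuinely constraining case is when $\ell_{st}$ meets $\{i,j\}$ in one index: say $s\in\{i,j\}$ and $t\notin\ell_{ij}$. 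Then exactly one of the three components $\la_{g_s},\la_{g_t},\la_{g_{s\ast t}}$ is moved (namely $\la_{g_s}$, since $s\in\ell_{ij}$ but $t$ and $s\ast t$ lie outside $\ell_{ij}$ as $t\notin\ell_{ij}$ forces $s\ast t\notin\ell_{ij}$), and the homomorphism identity becomes $\ep_{st}[\varphi_{ij}(x),y]=\ep_{st'}[\varphi_{ij}(x),y]$ where $t'$ is the index with $g_{i}+g_t$ versus $g_j+g_t$ — i.e. the two candidate second-index values when $s$ ranges over $\{i,j\}$. Working this out on the $\mathfrak{sl}_2$-generators of Lemma~\ref{le_lineacopiassl2}, the bracket $[a_s,b_t]$ vanishes unless the components are of matching "type", and one is forced to compare $\ep_{it}$ with $\ep_{jt}$ — which yields precisely the condition $\ep_{it}=\ep_{jt}$ for all $t\ne i,j$; and whenever some $\{s,t\}\in\mathcal S^\ep$ has $\ell_{st}$ properly straddling $\{i,j\}$ in a third way (meeting it in one point but with $s\ast t\in\{i,j\}$, or with two of $s,t,s\ast t$ in $\ell_{ij}$ but $\ell_{st}\ne\ell_{ij}$), one gets an unavoidable mismatch, which forces the first stated condition that every support line either equals $\ell_{ij}$ or avoids $\{i,j\}$.

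The main obstacle I expect is bookkeeping in this third case: one must pin down exactly which of the three homogeneous components of each line $\ell_{st}$ is displaced by $\theta_{ij}$, track how $\varphi_{ij}$ acts on the $\mathfrak{sl}_2$-basis of Eq.~\eqref{basedelossl2} (the sign flip $a_i\mapsto -a_j$, $a_j\mapsto a_i$ matters), and verify that the $\ep$-coefficients appearing on the two sides of the multiplicativity equation are forced to coincide precisely under the two stated conditions. Since this is the analogue of \cite[Lemma~4.9]{Thomas1} for $\mathfrak g_2$, I would lean on that computation, noting that the only new ingredient is that $\mathcal H_{ij}\cong r\,\mathfrak{sl}(2,\mathbb C)$ for $r=3$ or $4$ rather than $2$, and the per-summand check is identical; the converse direction (these conditions suffice) follows by running the same case analysis and observing all the obstructions have been removed.
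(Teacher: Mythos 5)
Your overall strategy --- testing multiplicativity of $\theta_{ij}$ on each pair of homogeneous components, organized by how the line $\ell_{st}$ meets $\ell_{ij}$, and using that $\varphi_{ij}\in\Aut(\mathcal{H}_{ij})$ while $[\la_{g_s},\la_{g_t}]=\la_{g_{s*t}}$ is the whole component --- is the intended one (the paper offers no proof beyond citing the $\f{g}_2$ analogue). However, two steps of your case analysis are wrong as stated, and they misattribute where each of the two conditions comes from. First, the case $\ell_{st}=\ell_{ij}$ is \emph{not} automatic: since $\varphi_{ij}$ swaps $\la_{g_i}$ and $\la_{g_j}$, for $x\in\la_{g_i}$ and $y\in\la_{g_{i*j}}$ one side of the identity carries the coefficient $\ep_{i,i*j}$ and the other carries $\ep_{j,i*j}$; the three edges of the line $\ell_{ij}$ need not have equal $\ep$-values, and this subcase is precisely the source of the only non-vacuous instance of the condition $\ep_{it}=\ep_{jt}$, namely at $t=i*j$. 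Second, in the case $s\in\{i,j\}$, $t\notin\ell_{ij}$, your displayed identity $\ep_{st}[\varphi_{ij}(x),y]=\ep_{st'}[\varphi_{ij}(x),y]$ is not the multiplicativity equation: for $x\in\la_{g_i}$, $y\in\la_{g_t}$ the left-hand side of $\theta_{ij}([x,y]^\ep)=[\theta_{ij}(x),\theta_{ij}(y)]^\ep$ equals $\ep_{it}[x,y]\in\la_{g_{i*t}}$ while the right-hand side equals $\ep_{jt}[\varphi_{ij}(x),y]\in\la_{g_{j*t}}$, and these lie in \emph{distinct} nonzero homogeneous components (two distinct Fano lines meet $\ell_{ij}$ only at $i$, resp.\ $j$). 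Hence this case forces $\ep_{it}=\ep_{jt}=0$ for all $t\notin\ell_{ij}$ --- which is (part of) the line condition --- and not merely the equality $\ep_{it}=\ep_{jt}$.

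Some further bookkeeping corrections: the subcase you describe as ``two of $s,t,s*t$ in $\ell_{ij}$ but $\ell_{st}\ne\ell_{ij}$'' cannot occur, since two distinct lines of the Fano plane share exactly one point. The case you must still treat explicitly is $s,t\notin\ell_{ij}$ with $s*t\in\{i,j\}$: there $\theta_{ij}$ fixes $x$ and $y$ but moves $[x,y]$ from $\la_{g_i}$ to $\la_{g_j}$ (or vice versa), so again the two sides land in different components and $\ep_{st}=0$ is forced. Finally, the case $s=i*j$, $t\notin\ell_{ij}$ imposes nothing, because $\varphi_{ij}$ fixes $\la_{g_{i*j}}$ pointwise. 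With these corrections the case analysis closes in both directions and yields exactly the two stated conditions.
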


This allows us to reinterpret \cite[Proposition~4.11]{Thomas1}.

\begin{prop}\label{nueva}
Let $\ep$ and $\ep'$ be admissible graded contractions of  a very good grading $\Gamma$ on a Lie algebra $\la$. Then:
\begin{enumerate} 
\item[\rm(a)] If  $\ep\sim\ep'$, there is a collineation $\sigma$ with  $\sigma\cdot\ep\approx\ep'$ except  possibly if $ \mathcal S^{\ep}=X^{(i)}$ 
or $ \mathcal S^{\ep}\subset X_{(i)}$ for some $i\in I$.
\end{enumerate}
In case $\Gamma=\Gamma_{\f{b}_3}$
 or $\Gamma_{\f{d}_4}$,
\begin{enumerate}
 \item[\rm(b)]  If  $\ep\sim\ep'$ and $ \mathcal S^{\ep}\not\sim_c \mathcal S^{\ep'}$, then there are $i,j,k$,  not forming a line, such that $ \mathcal S^{\ep}$ and $ \mathcal S^{\ep'}$ are the two nice sets in 
 Eq.~\eqref{eq_T10T8}.
\item[\rm(c)]  If  $\ep\sim\ep'$ such that  $ \mathcal S^{\ep}\sim_c \mathcal S^{\ep'}$, then there is $\sigma$ collineation with  $\sigma\cdot\ep\approx\ep'$. 
\end{enumerate}
\end{prop}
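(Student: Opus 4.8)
\textbf{Proof plan for Proposition~\ref{nueva}.}
The plan is to treat the three items in a specific order, using \cite[Proposition~4.11]{Thomas1} as a black box for the combinatorial backbone and then adding the extra structural input coming from the explicit isomorphisms available on $\f{b}_3$ and $\f{d}_4$. For item (a) I would simply translate \cite[Proposition~4.11]{Thomas1} to the present setting: since $\Gamma$ is a very good grading, $\weyl(\Gamma)\cong\Aut(\bb Z_2^3)$ acts on admissible graded contractions as in the lemma above, so $\sigma\cdot\ep\sim\ep$ for every collineation $\sigma$, and the proof of \cite[Proposition~4.11]{Thomas1} goes through verbatim because it only used (i)--(iii) and the maximality of the Weyl group; the exceptional supports $X^{(i)}$ and the subsets of $X_{(i)}$ are exactly the ones flagged in \eqref{eq_malos} and in the original statement, where the combinatorial argument (permuting edges by a collineation) fails to be enough.

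For item (b) I would argue by elimination. Assume $\ep\sim\ep'$ but $\mathcal S^\ep\not\sim_c\mathcal S^{\ep'}$. By item (a), the only way this can happen is if $\mathcal S^\ep$ (and, by symmetry, $\mathcal S^{\ep'}$) is among the exceptional supports $X^{(i)}$ or a subset of some $X_{(i)}$; and then a direct inspection of the nice sets contained in $X_{(i)}$ — using the classification of nice sets and their collineation classes from \cite[Theorem~3.27]{Thomas1} — shows that the only pair of non-collinear nice sets that can possibly host equivalent (not merely equivalent-up-to-collineation) graded contractions is the pair $T,T'$ of \eqref{eq_T10T8}; here Example~\ref{ex_excepcion} shows that this pair \emph{does} occur, so the list is not vacuous. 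Concretely I would run through the at most a handful of nice subsets of $X_{(i)}$ up to collineation, note that $\ep\sim\ep'$ forces $\mathcal S^{\ep'}$ to lie in some $X_{(\sigma(i))}$ as well (apply item (a) to whichever of the two supports is \emph{not} exceptional, or use Lemma~\ref{le_soportesiguales}), and check that all remaining combinations either are collinear or cannot be equivalent because of the invariants recorded in Theorem~\ref{teo_lasalgebras} (dimensions of derived algebra and centre, nilpotency step). The pair in \eqref{eq_T10T8} is the unique survivor.

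For item (c), assume $\ep\sim\ep'$ with $\mathcal S^\ep\sim_c\mathcal S^{\ep'}$. Replacing $\ep$ by $\sigma_0\cdot\ep$ for a collineation $\sigma_0$ carrying $\mathcal S^\ep$ onto $\mathcal S^{\ep'}$ (legitimate by item (a), since $\sigma_0\cdot\ep\sim\ep\sim\ep'$), we reduce to the case $\mathcal S^\ep=\mathcal S^{\ep'}=:T$ and must produce a collineation $\sigma$ with $\sigma\cdot\ep\approx\ep'$. If $T$ is not one of the three bad sets in \eqref{eq_malos}, Theorem~\ref{th_normaliz} gives $\ep\sim_n\ep^T$ and $\ep'\sim_n\ep^T$, hence $\ep\approx\ep'$ and we take $\sigma=\id$. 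So the whole content is the three families in \eqref{eq_malos}: here I would invoke Theorem~\ref{teo_fuerte} to replace $\sim_n$ by $\approx$, reduce to $\ep=\eta^\lambda,\mu^\lambda,\beta^{\lambda,\lambda'}$ versus $\ep'=\eta^{\lambda'},\mu^{\lambda'},\beta^{\lambda'_1,\lambda'_2}$, and then use the \emph{extended} maps $\theta_{ij}$ of Lemma~\ref{lematitas} (built from the automorphisms $\varphi_{ij}$ of $\mathcal H_{ij}$ of Remark~\ref{re_fiij}) together with the normalization maps $f_\alpha$ to realize the needed identifications $\lambda\leftrightarrow -\lambda$ etc.\ concretely, exactly as \cite{Thomas1} does in the $\f{g}_2$ case; the point of Lemma~\ref{lematitas} is precisely that $\theta_{ij}$ is an automorphism of $\la^\ep$ under the stated support/coincidence conditions, which are met for these supports. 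The main obstacle I expect is item (b): it is the only place where one must genuinely know that no \emph{other} accident (besides \eqref{eq_T10T8}) produces equivalent contractions with non-collinear supports, and ruling this out requires combining the full list of nice subsets of $X_{(i)}$ with the isomorphism invariants of Theorem~\ref{teo_lasalgebras} rather than any single clean combinatorial principle.
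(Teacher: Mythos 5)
Your item (a) coincides with the paper's argument (transfer of the proof of \cite[Proposition~4.11]{Thomas1}, which only uses (i)--(iii) and the maximality of the Weyl group), and for (c) you point at the right tools. One caveat on (c): for the bad supports the content is a \emph{necessity} statement, so the maps $\theta_{ij}$ of Lemma~\ref{lematitas} (conveniently normalized) must be used to \emph{modify a given} isomorphism $\varphi\colon\la^{\ep}\to\la^{\ep'}$, composing with them so that the induced permutation $\mu$ of $I$ becomes a collineation; merely ``realizing the identifications $\lambda\leftrightarrow-\lambda$'' is the sufficiency direction and does not show that an arbitrary isomorphism can be brought to one compatible with a collineation. (Also, Lemma~\ref{le_soportesiguales} is stated for $\approx$; for $\sim$ the fact you need is that $\mu$ maps $\mathcal S^{\ep}$ bijectively onto $\mathcal S^{\ep'}$, which follows easily from property (ii), but it is not literally that lemma.)

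The genuine gap is in (b). The paper does not prove (b) by comparing invariants: it shows that for every exceptional support except the configuration of Eq.~\eqref{eq_T10T8} the pair $(\varphi,\mu)$ can be corrected (via Lemma~\ref{lematitas}) to one whose permutation is a collineation, which forces the supports to be collinear. Your proposed elimination through the invariants of Theorem~\ref{teo_lasalgebras} breaks down exactly at the pair $(T_{13},T_{14})$: these are non-collinear nice subsets of $X_{(1)}$ of the same cardinality, both exceptional for item (a), and the contracted algebras share every invariant recorded in that theorem (2-step solvable, not nilpotent, $\dim\f{z}(\la^\ep)=2r$, $\dim(\la^\ep)'=4r$) --- indeed the theorem lists them in a single bullet precisely because it cannot separate them. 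Hence your check cannot exclude an equivalence between a contraction supported on $T_{13}$ (the single class $\ep^{T_{13}}$) and one supported on $T_{14}$ (the family $\eta^\lambda$), while statement (b) asserts that no such equivalence exists, the only surviving non-collinear pair being \eqref{eq_T10T8}. Closing this requires either the correction argument of the paper or a strictly finer invariant than those of Theorem~\ref{teo_lasalgebras}; as written, your plan leaves this case open, and it is not an accident that you flagged (b) as the main obstacle.
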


\begin{proof}
There always exists an isomorphism of graded algebras $ \varphi\colon \la^{\ep }\to\la^{\ep'}$ and   a bijection 
$\mu  \colon I\to I$ defined by $\varphi\big(\la_{g_i}\big) = \la_{g_{\mu (i)}}$.
If $\mu$ were a collineation, by composing $\varphi$ with the map $\tilde f_{\mu}\in\mathrm{Aut}(\f{d}_4)$  at the end of \S2,
 $\tilde f_{\mu}\varphi\colon \la^{\ep}\to\la^{\mu\cdot\ep'}$ would be a graded-isomorphism.
 Let $T=\mathcal S^{\ep}$. Except possibly for $T=X^{(i)}$ 
or $ T\subset X_{(i)}$, we can follow the lines of the proof of \cite[Proposition~4.11]{Thomas1}
  to find such a pair $(\varphi,\mu)$ with $\mu$ a collineation, 
  modifying the original pair when necessary.
 Whereas, if $T=X^{(i)}$ 
or $ T\subset X_{(i)}$ but $\Gamma$ is one of our gradings on $\f b_3$ or $\f{d}_4$, then the maps in Lemma~\ref{lematitas}, conveniently normalized,
 permit to find $(\varphi',\mu')$. 
 The only situation in which finding such a pair   need not be possible
 is $T=\{ \{i,j\},\{i,k\},\{i,s\} \}$, with
 $s*k\in\{i,i*j\}$. 
\end{proof}

Coming back to our purpose, the classification of the graded contractions of $ \Gamma_{\f{b}_3}$
and of  $\Gamma_{\f{d}_4}$,
 we take into account that  $\ep^{T_8}\sim \ep^{T_{10}}$, as proved in  Example~\ref{ex_excepcion}, and that the graded contractions 
$\{\ep^{T_i}\mid 1\le i\le 24,i\ne10\}$ are pairwise not equivalent
 by Proposition~\ref{nueva}. Moreover,  the number of equivalence classes with support $T_i$ is at most the number of strong equivalence classes with support $T_i$, equal to $1$ if $i\ne14,17,20$. Only for these three indices need we be concerned. However   the next result and its proof turn out to be completely analogous to \cite[Theorem~4.13]{Thomas1} for $\Gamma_{\f{g}_2}$, that is, we only need to use Proposition~\ref{nueva} to get: 

\begin{theorem}\label{th_lasclasesdeverdad}
Representatives of all the classes up to equivalence of the graded contractions of $\Gamma_{\f{b}_3}$ and $\Gamma_{\f{d}_4}$ are just:
\begin{enumerate}  
\item [\rm (i)]  $\{\eta^{T_i}\mid i\ne 8,14,17,20\}$;
\item [\rm (ii)] $\{(1, 1, 1, \lambda)\mid  \lambda\in\bb C^\times\}$ related to $T_{14}$,  where $(1, 1, 1, \lambda)\sim (1, 1, 1, \lambda')$ if and only if $ \lambda'\in\{\lambda,\frac1{\lambda}\}$;
\item [\rm (iii)]  $ \{(1, \lambda, 1, 1, \lambda)\mid  \lambda\in\bb C^\times\}$ related to $T_{17}$,
where $(1, \lambda,1, 1, \lambda)\sim (1, \lambda',1, 1, \lambda')$ if and only if $\lambda'\in\{\pm\lambda,\pm\frac1{\lambda}\}$;

\item [\rm (iv)] $\{(1, \lambda , \mu, 1, \lambda, \mu)\mid \lambda,\mu\in\bb C^\times\}$ related to $T_{20}$,
where two   maps $(1, \lambda , \mu, 1, \lambda, \mu)\sim (1, \lambda' , \mu', 1, \lambda', \mu')$ if and only if the set
$\{\pm\lambda',\pm\mu'\}$ coincides with either $\{\pm\lambda,\pm\mu\}$ or $\{\pm\frac1\lambda,\pm\frac\mu\lambda\}$ or $\{\pm\frac\lambda\mu,\pm\frac1\mu\}$.
\end{enumerate} 
\end{theorem}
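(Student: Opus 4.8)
The plan is to leverage the three previously established pillars — Theorem~\ref{teo_fuerte} (strong equivalence = equivalence by normalization), Theorem~\ref{th_normaliz} (the normalization classification, which already lists the normalization classes of $\eta^\lambda$, $\mu^\lambda$, $\beta^{\lambda,\lambda'}$), and Proposition~\ref{nueva} (which controls when $\sim$ refines to $\approx$ via a collineation) — exactly as in the proof of \cite[Theorem~4.13]{Thomas1}. First I would recall that every admissible graded contraction is equivalent to some $\ep^T$, $\eta^\lambda$, $\mu^\lambda$ or $\beta^{\lambda,\lambda'}$ by the Corollary following Theorem~\ref{teo_fuerte}, and that by Example~\ref{ex_excepcion} one has $\ep^{T_8}\sim\ep^{T_{10}}$, so $T_{10}$ may be dropped from the list. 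It then remains to show: (a) the $\ep^{T_i}$ for $i\in\{1,\dots,24\}\setminus\{8,10,14,17,20\}$, together with the families attached to $T_{14},T_{17},T_{20}$, exhaust all $\sim$-classes; (b) these representatives are pairwise non-equivalent; and (c) within each of the three families the stated identifications are exactly the equivalence relations.

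For (a) and (b): if $\ep\sim\ep'$ and the supports are collinear, Proposition~\ref{nueva}(c) gives a collineation $\sigma$ with $\sigma\cdot\ep\approx\ep'$, so by Lemma~\ref{le_soportesiguales} $S^{\ep'}=S^{\sigma\cdot\ep}$ and the two contractions live over collinear supports and differ by a collineation followed by a strong equivalence. If the supports are \emph{not} collinear, Proposition~\ref{nueva}(b) pins down the only possibility: $S^\ep,S^{\ep'}$ are the two sets of Eq.~\eqref{eq_T10T8}, i.e. collinear to $T_8$ and $T_{10}$ respectively — which is precisely the identification $\ep^{T_8}\sim\ep^{T_{10}}$ already accounted for. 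Hence two representatives from distinct collinearity classes (other than the $T_8/T_{10}$ pair) cannot be equivalent, giving pairwise non-equivalence of $\{\ep^{T_i}: i\ne 8,10,14,17,20\}$ and establishing that all classes are represented. For the supports $T_{14},T_{17},T_{20}$ (which are $\subset X_{(1)}$ up to collineation), Theorem~\ref{th_normaliz} tells us every admissible contraction with that support is normalization-equivalent to some $\eta^\lambda$, $\mu^\lambda$ or $\beta^{\lambda,\lambda'}$; so (iv), (iii), (ii) do list all classes with those supports.

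For (c): I would combine two inputs. First, Theorem~\ref{th_normaliz} already records exactly when $\eta^\lambda\sim_n\eta^{\lambda'}$ (never, for $\lambda\ne\lambda'$), $\mu^\lambda\sim_n\mu^{\lambda'}$ ($\lambda=\pm\lambda'$), and $\beta^{\lambda_1,\lambda_2}\sim_n\beta^{\lambda_1',\lambda_2'}$ ($\lambda_i=\pm\lambda_i'$); by Theorem~\ref{teo_fuerte} this is the same as strong equivalence. Second, the collineations stabilizing the support $X_{(1)}$ (resp. $T_{14}$, $T_{17}$) act on the parameter(s), and composing the normalization moves with these collineation-induced relabelings (permuting the roles of the lines $\ell_{1j}$, $\ell_{1k}$, $\ell_{1,j*k}$ through $1$) produces exactly the extra identifications $\lambda\leftrightarrow\frac1\lambda$ for $T_{14}$, $\lambda\leftrightarrow\pm\lambda,\pm\frac1\lambda$ for $T_{17}$, and the $S_3$-type action $\{\pm\lambda,\pm\mu\}\mapsto\{\pm\frac1\lambda,\pm\frac\mu\lambda\}\mapsto\{\pm\frac\lambda\mu,\pm\frac1\mu\}$ for $T_{20}$; conversely, by Proposition~\ref{nueva}(c) applied to these supports together with the normalization bookkeeping, no further identifications occur. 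The main obstacle is this last point: verifying that the equivalences among the $\eta$/$\mu$/$\beta$ families are generated precisely by normalization and stabilizer collineations and nothing more — i.e. that an abstract (non-graded, non-parameter-preserving) isomorphism between two members of a family cannot force an identification outside the stated list. This is where one cannot stay purely combinatorial; however, since Proposition~\ref{nueva}(c) already upgrades $\sim$ to $\approx$-plus-collineation whenever the supports are collinear (which they are, being equal up to collineation within each family), the remaining verification reduces to tracking the collineation action on parameters, and the computation is the same as in \cite[Theorem~4.13]{Thomas1}; so the obstacle is really one of careful bookkeeping rather than a new idea.
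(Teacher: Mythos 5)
Your proposal reproduces the paper's argument essentially verbatim: the paper likewise reduces to admissible representatives via the corollary of Theorem~\ref{teo_fuerte} and Theorem~\ref{th_normaliz}, invokes Example~\ref{ex_excepcion} for $\ep^{T_8}\sim\ep^{T_{10}}$, uses Proposition~\ref{nueva} for pairwise non-equivalence and to reduce the three infinite families to stabilizer collineations plus normalization, and defers the final parameter bookkeeping to the analogous computation in \cite[Theorem~4.13]{Thomas1}. The only slip is that you drop \emph{both} $T_8$ and $T_{10}$ from the list of single-class supports ($i\in\{1,\dots,24\}\setminus\{8,10,14,17,20\}$), whereas exactly one of the equivalent pair must be retained as a representative (the theorem keeps $T_{10}$, discarding $T_8$), so the count should be $20$ algebras plus three families.
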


\begin{conclusion}\label{conclu}
Our main result is  Theorem~\ref{th_lasclasesdeverdad}, which provides the desired classification of graded contractions up to equivalence of both $\Gamma_{\f{b}_3}$ and $\Gamma_{\f{d}_4}$. 
Then, for each $\la\in\{\f{b}_3,\f{d}_4 \}$, there exist 20 not isomorphic Lie algebras  jointly with three infinite families. Although some of their properties are exhibited in Theorem~\ref{teo_lasalgebras}, it is clear that these properties alone do not provide enough invariants to distinguish the different isomorphism classes. 

It has been a surprise for us to be able to apply the results of $\f g_2$ to other algebras almost without any change, thus producing new solvable and nilpotent bigger algebras. The fact that the results are essentially identical does not mean that the proofs are identical, the work of adaptation being non-trivial (done mainly in Lemmas~\ref{le_corchete}, \ref{co_lasnuestrassongood}, \ref{le_lineacopiassl2}, and Theorem~\ref{teo_fuerte}, jointly with a new writing of the results which provides a less technical approach).
\end{conclusion}

 	
\section{Comments on the real case  }	\label{se_real}

Now let $\mathbb{O}$ be the real algebra  with basis $\{e_0,\dots, e_7\}$ and products $e_0=1$, $e_i^2=-1$ for all $i\ne0$,  and $e_ie_j=-e_je_i=e_k$,   whenever 
$(i,j,k)$ or any of its cyclic permutations belongs to $\mathbf{L}$. 
Thus  $\mathbb{O}$ 
is a division alternative algebra and the polar form of $n$
(we abuse the   notation by using the same $n$ as before)  is a scalar product.
We   denote  by $\f{g}_{2,-14}\equiv \Der(\mathbb{O})$, which is now  a compact Lie algebra with a negative definite Killing form,  
$\f{b}_{3,-21}\equiv \{f\in \mathfrak{so}(\mathbb{O},n):f(1)=0\}\subset\mathfrak{d}_{4,-28}\equiv \mathfrak{so}(\mathbb{O},n)$, which are isomorphic to $ \mathfrak{so}(7,\bb R)$ and $ \mathfrak{so}(8,\bb R)$ respectively, both compact Lie algebras. (The second subindex refers to the signature of the Killing form, as usual.)
Again the three Lie algebras $\f{g}_{2,-14}$, $\f{b}_{3,-21}$ and $\f{d}_{4,-28}$ have very good $\bb Z_2^3$-gradings induced from the $\bb Z_2^3$-grading on $\mathbb{O}$ given by $\mathbb{O}_{g_i}=\mathbb R e_i$ and $\mathbb{O}_{g_0}=\mathbb R 1$. 
The arguments in \S\ref{se_resultadosengrcont} imply several facts: if $\ep$ is a graded contraction of $\la$, then there exists $\ep'$ an admissible graded contraction such that  $\la^\ep$ is  isomorphic to $\la^{\ep'}$. Furthermore, the support of $\ep'$ is a nice set $T$ which is collinear to one of the 24 nice sets in Theorem~\ref{teo_lasalgebras}. As the Weyl group is 
again the whole group $\Aut(\bb Z_2^3)$, $\la^{\ep'}$ is isomorphic to  $\la^{\ep''}$ for $\ep''$ another  admissible graded contraction with support equal to one of the nice sets in Theorem~\ref{teo_lasalgebras}.  Moreover, for each   nice set $T$ in that theorem, $\ep^T$ is a graded contraction and $\la^{\ep^T}$ is a Lie algebra satisfying  the properties described in  Theorem~\ref{teo_lasalgebras}. 
The only difference between the real case and the complex one  is that now there are more equivalence classes of nonisomorphic Lie algebras. The 
Lie algebras in Theorem~\ref{th_lasclasesdeverdad},
 both the 20 Lie algebras $\la^{\ep^T}$ and the three infinite families $\la^{\eta^\lambda}$, $\la^{\mu^\lambda}$ and $\la^{\beta^{\lambda,\lambda'}}$ related to the nice sets  in \eqref{eq_malos}, are still not equivalent, but they do not constitute the whole set of equivalence classes.  
   The following proposition provides a remarkable example.
  
 \begin{prop}
 For $\ell\in\mathbf{L}$, define $\ep\colon G\times G\to \bb R$ by  
 $\ep(g_i,g_j)=0$ if either $i=j$ or $i=0$ or $j=0$, $\ep(g_i,g_j)=-1$ if $i\ne j$, $i,j\notin\ell$, and $\ep(g_i,g_j)=1$ otherwise. 
 Then $\ep$ is a graded contraction and the Lie algebra $\la^\ep$ is not isomorphic to  $\la$ (for any $\la\in\{\f{g}_{2,-14},\f{b}_{3,-21},\f{d}_{4,-28}\}$).
 \end{prop}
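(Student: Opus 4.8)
The plan is to prove the two assertions in turn. That $\ep$ is a graded contraction is best seen not by normalizing over $\bb R$ (which is impossible here) but over $\bb C$: let $\alpha\colon G\to\bb C^{\times}$ be given by $\alpha(g_0)=1$, $\alpha(g_i)=1$ for $i\in\ell$, and $\alpha(g_i)=\sqrt{-1}$ for $i\in I\setminus\ell$. I would check that $\ep=(\ep^{X})^{\alpha}$ as maps $G\times G\to\bb C$; since $\ep^X$ is admissible and $\ep^X_{ij}=1$ for all $i\ne j$, this amounts to $\alpha(g_i)\alpha(g_j)/\alpha(g_{i\ast j})=\ep_{ij}$ for every $i\ne j$ in $I$, a short case analysis on the Fano line $\ell_{ij}=\{i,j,i\ast j\}$: if $\ell_{ij}=\ell$ all three values of $\alpha$ equal $1$ and $\ep_{ij}=1$; otherwise $\ell_{ij}$ meets $\ell$ in exactly one point, so exactly one of $i,j,i\ast j$ is in $\ell$, and the three subcases each give $\pm1$ in agreement with $\ep_{ij}$. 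Because a normalization of a graded contraction is again a graded contraction, $\ep=(\ep^X)^{\alpha}$ is a graded contraction of the complexified grading, hence it is admissible and satisfies conditions (b1) and (b2) of Proposition~\ref{ref_existeadmisible}; as those conditions are purely about the values of $\ep$, Proposition~\ref{ref_existeadmisible} applied to the (very good, hence good) real grading $\Gamma_\la$ shows that $\ep$ is a graded contraction of $\Gamma_\la$ over $\bb R$ too. (In passing, $\lae\otimes_{\bb R}\bb C\cong(\la_{\bb C})^{\ep}\cong\la_{\bb C}$ is simple, so $\lae$ is a real form of the corresponding complex simple Lie algebra.)

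To see that $\lae\not\cong\la$ I would compute the signature of the Killing form $\kappa^{\ep}$ of $\lae$. For homogeneous $x\in\la_{g_i}$ and $y\in\la_{g_j}$ the map $\ad^{\ep}_x\ad^{\ep}_y$ sends $\la_{g_k}$ into $\la_{g_i+g_j+g_k}$, so $\kappa^{\ep}(x,y)=0$ unless $g_i+g_j=e$, i.e. $i=j$; thus $\kappa^{\ep}$ is block-diagonal with respect to $\lae=\bigoplus_{i\in I}\la_{g_i}$, exactly like the Killing form $\kappa$ of $\la$. For $x,y\in\la_{g_i}$, reading off the contracted bracket gives $\ad^{\ep}_x\ad^{\ep}_y\big|_{\la_{g_j}}=\ep_{ij}\,\ep_{i,\,i\ast j}\cdot\ad_x\ad_y\big|_{\la_{g_j}}$, so the whole operator $\ad^{\ep}_x\ad^{\ep}_y$ on $\la$ is obtained from $\ad_x\ad_y$ by rescaling the $g_j$-block by $\ep_{ij}\ep_{i,\,i\ast j}$. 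The key point is that this scalar does \emph{not} depend on $j$: it equals $1$ if $i\in\ell$ and $-1$ if $i\notin\ell$. Indeed, if $i\in\ell$ then $\ep_{ij}=\ep_{i,\,i\ast j}=1$ by the very definition of $\ep$; and if $i\notin\ell$ then $\ell_{ij}\ne\ell$ meets $\ell$ in a single point, which is $j$ or $i\ast j$, so precisely one of the factors $\ep_{ij},\ep_{i,\,i\ast j}$ is $1$ and the other is $-1$. Hence $\ad^{\ep}_x\ad^{\ep}_y=\nu_i\,\ad_x\ad_y$ on all of $\la$ with $\nu_i=\pm1$ as above, and therefore $\kappa^{\ep}\big|_{\la_{g_i}}=\nu_i\,\kappa\big|_{\la_{g_i}}$.

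Now $\la$ is compact, so $\kappa$ is negative definite, and its restriction to $\la_{g_i}$ is negative definite of dimension $r=\dim\la_{g_i}$ (the rank of $\la$). Exactly three indices of $I$ lie on $\ell$ and four do not, so $\kappa^{\ep}$ is negative definite on three of the seven blocks and positive definite on the remaining four: $\kappa^{\ep}$ has signature $(4r,3r)$ and is indefinite. Since an isomorphism of Lie algebras carries the Killing form of one to that of the other, and $\kappa$ is definite while $\kappa^{\ep}$ is not, $\lae\not\cong\la$. (One may even identify $\lae$: for types $G_2$, $B_3$, $D_4$ the real form is determined by the signature of its Killing form, so $\lae$ is in fact the split real form $\f{g}_{2,2}$, $\f{so}(4,3)$ or $\f{so}(4,4)$; this is not needed for the statement.)

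The only step carrying genuine content is the combinatorial identity that $\ep_{ij}\ep_{i,\,i\ast j}$ is independent of $j$; the rest is routine bookkeeping with graded contractions and Killing forms, together with the elementary observation that normalization over $\bb C$ settles the first claim. That identity is precisely the leverage needed, because this scalar is the factor by which the contraction rescales the Killing form block by block, and its $j$-independence is what forces the rescaled form to remain block-diagonal with a definite form of a predictable sign on each block — yielding a signature that the compact form cannot have.
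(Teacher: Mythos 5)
Your proof is correct, and it realizes exactly what the paper intends: the paper omits an explicit proof but the remark immediately following the proposition identifies $\la^\ep$ as the split real form versus the compact $\la$, which is precisely what your block-by-block Killing-form computation (with the sign $\nu_i=\ep_{ij}\ep_{i,i*j}$ independent of $j$) establishes, and your complex normalization $\ep=(\ep^X)^\alpha$ is the natural verification of the graded-contraction conditions (b1)--(b2).
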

 
 That is, $\ep$ provides an admissible graded contraction with support   $X$  such that $\la^\ep$ is isomorphic to the split real algebra of type $G_2, B_3$ and $D_4$ respectively,   while  for $\ep$ constantly equal to $1$, the support is also $X$ and $\la^\ep $  is compact.
 As far as we know, there are no precedents in the literature on a complete classification of graded contractions of a real Lie algebra.   Future work could go in this direction.

\end{document}